\documentclass[a4paper,12pt]{article}
\usepackage[pagewise]{lineno}
\usepackage{amsmath}
\usepackage{amssymb}
\usepackage{mathrsfs}
\usepackage{amsfonts}
\usepackage{amsthm}
\usepackage{pstricks, pst-node, pst-text, pst-3d,psfrag}
\usepackage{graphicx}
\usepackage{indentfirst}
\usepackage{enumerate}
\usepackage{subfigure}
\usepackage{cite}
\usepackage[colorlinks=true]{hyperref}
\hypersetup{urlcolor=blue, citecolor=red}

\theoremstyle{plain}
\newtheorem{thm}{Theorem}[section]
\newtheorem{prop}[thm]{Proposition}

\newtheorem{lem}[thm]{Lemma}
\theoremstyle{definition}

\theoremstyle{remark}
\newtheorem{rmk}{\textnormal{{\bf Remark}}}[section]

\numberwithin{equation}{section}

\parindent 0.5cm
\evensidemargin 0cm \oddsidemargin 0cm \topmargin 0cm \textheight
22cm \textwidth 16cm \footskip 2cm \headsep 0cm

\makeatletter 
\@addtoreset{equation}{section}
\makeatother  

\newcommand\RR{\ensuremath{\mathbb{R}}}

\newcommand\ep{\epsilon}
\newcommand\om{\omega}

\begin{document}

\title{ Nonexistence of observable chaos and its robustness in strongly monotone dynamical systems\thanks{Supported by NSF of China No.11825106 and 12090012.}}

\setlength{\baselineskip}{16pt}

\author {
Yi Wang and Jinxiang Yao\thanks{Corresponding author: jxyao@mail.ustc.edu.cn (J. Yao).}
\\[2mm]
School of Mathematical Sciences\\
University of Science and Technology of China\\
Hefei, Anhui, 230026, P. R. China
}
\date{}
\maketitle

\begin{abstract}
For strongly monotone dynamical systems on a Banach space, we show that the largest Lyapunov exponent  $\lambda_{\max}>0$ holds on a shy set in the measure-theoretic sense. This exhibits that strongly monotone dynamical systems admit no observable chaos, the notion of which was formulated by L.S. Young. We further show that such phenomenon of no observable chaos is robust under the $C^1$-perturbation of the systems.

\textbf{Keywords}:   Transient chaos; Largest Lyapunov exponents;  Improved prevalent dynamics; $C^1$-robustness; Monotone dynamical systems.
\end{abstract}

\section{Introduction}
\emph{Observable chaos} means that the largest Lyapunov exponent $\lambda_{\max}>0$ holds on a positive Lebesgue measure set (see, Young
\cite[Section 1]{Y13},\cite[Section 4]{Y13B} or \cite[Section 3]{Y18}). The occurrence of observable chaos implies that the instability persists for all future times and happens  on a set large enough to be observable. It is actually a stronger form of chaos than the presence
of the complicated chaotic behavior (say, horseshoe) alone. As a matter of fact, for the horseshoe itself occupies a zero measure set, its presence does not preclude the possibility that
orbits starting from a full measure set may tend eventually to a stable equilibrium.
Such unobservable complicated dynamics are sometimes characterized as {\it transient chaos} (see e.g., Young \cite[Section 1, p.1442]{Y13}). It implies that orbits starting near a horseshoe may appear chaotic for a short time (as they follow orbits within the horseshoe) before tending to a tamed behavior. Transient chaos is ubiquitous in fluid, chemical, biological, and engineering systems. One may refer to \cite{ASY97,Ji21,LQ10,LT11,T15} and references therein for more details of the theory and applications of transient chaos and transient dynamics.

The theory of monotone dynamical systems grew out of the series of ground-breaking work of M. W. Hirsch (\cite{H85,HS05,H84,H88}). Since the profound work of Smale \cite{S76}, it has been known that monotone dynamical systems can certainly possess orbits that behave in chaotic way, including horseshoes (see e.g., \cite{DP98,S76,S88,S95,S97,WJ01,WX10}). On the other hand, the celebrated generic/prevalent convergence (see e.g., \cite{EHS08,H85,HS05,P89,PT92,ST91,T94,WY20-1,WY21-2,WYZ20-2}) make us believe that the chaotic behavior is so unstable as to be unobservable in monotone dynamical systems. However, there is a lack of rigorous research in the existing literatures on the observability of chaos in monotone dynamical systems.

In the present paper, we shall focus on and prove the nonexistence of observable chaos (in the sense of Young \cite[Section 1]{Y13}) for $C^1$-smooth strongly monotone dynamical systems on a Banach space. More precsisely, we will show that the largest Lyapunov exponent  $\lambda_{\max}>0$ (see the Young's definition in \eqref{E:map-lar-lya} in Section 2) holds on a shy set in the measure-theoretic sense. Furthermore, we will also show that such phenomenon of no observable chaos is robust under the $C^1$-perturbation of the systems.

To this purpose, we formulate some standing hypotheses:

\vskip 2mm

\noindent \textbf{(H1)} $(X,C)$ is a strongly ordered separable Banach Space.
\vskip 1mm

\noindent \textbf{(H2)} $F_{0}:X \to X$ is a compact ${C}^{1}$-map, such that for any $x\in X$, the Fr\'echet derivative $DF_{0}(x)$

\quad\  is a strongly positive operator, i.e., $DF_{0}(x)v\gg0$ whenever $v>0$.

\newtheorem*{thma}{\textnormal{\textbf{Theorem A}}}
\begin{thma}[No observable chaos for monotone mappings]
\emph{Assume that \textnormal{(H1)-(H2)} hold. Assume also $F_{0}$ is pointwise dissipative with an attractor $A$. Then the set
$$S_{0}:=\{x\in X:\ \lambda_{\max}(x,F_{0}) >0\},$$
is  shy in $X$. In particular, if $X=\mathbb{R}^{d}$, then the set $S_{0}$ is of Lebesgue measure  zero  in $\mathbb{R}^{d}$; and hence, $F_{0}$ cannot have observable chaos.}
\end{thma}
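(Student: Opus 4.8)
The plan is to prove the stronger statement that $S_0$ is shy by exhibiting an explicit transverse measure adapted to the order. Since $(X,C)$ is strongly ordered, fix an order unit $e\gg 0$ (an interior point of $C$), and let $\mu$ be the push-forward of normalized Lebesgue measure on $[0,1]$ under $s\mapsto se$; this is a compactly supported Borel probability measure. Unravelling the definition of shyness, $\mu(S_0+x)=0$ for every $x$ is equivalent (writing $y=-x$) to the assertion that for every $y\in X$ the slice $\{s\in[0,1]:\lambda_{\max}(y+se,F_0)>0\}$ is Lebesgue-null, and I would in fact aim to show this slice is at most countable. A first reduction, valid on all of $X$, replaces the operator norm in \eqref{E:map-lar-lya} by the action on $e$: since every $DF_0^n(x)$ is a positive operator, $e$ is interior, and $C$ is normal, a standard order-unit estimate yields $c\,\|DF_0^n(x)e\|\le\|DF_0^n(x)\|\le C\,\|DF_0^n(x)e\|$ with constants independent of $n$ and $x$, so that $\lambda_{\max}(x,F_0)=\limsup_n\frac1n\log\|DF_0^n(x)\,e\|$.

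Next I would exploit the order structure along the totally ordered line $\ell_y=\{y+se:s\in[0,1]\}$. Strong monotonicity orders the orbits, $F_0^n(y+s_1e)\ll F_0^n(y+s_2e)$ for $s_1<s_2$, and this ordering passes to the limit behaviour inside the compact attractor $A$. A monotone, order-bounded family in $A$ can jump at only countably many parameters — countability follows by telescoping the nonnegative order-increments and invoking normality of $C$ — so, discarding that countable set together with the (likewise countable, by the same jump argument applied to the order-$\limsup$ and $\liminf$ of the orbit) set of non-convergent parameters, I am left with continuity values $s$ at which the orbit converges to a single equilibrium $p(s)$. There $DF_0(p(s))$ is a compact strongly positive operator, so by the Krein--Rutman theorem its spectral radius $r(s):=r(DF_0(p(s)))$ is a simple dominant eigenvalue with eigenvector $\psi(s)\gg 0$. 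The goal then splits into bounding $\lambda_{\max}(y+se,F_0)\le\log r(s)$ and showing $r(s)\le 1$ off a further countable set.

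The first bound is asymptotic linearization: the base points of the product $\prod_{k=0}^{n-1}DF_0\big(F_0^k(y+se)\big)$ converge to $p(s)$, the map $x\mapsto DF_0(x)$ is norm-continuous, and the spectral radius of the single limit operator is the infimum of its operator norm over equivalent norms; comparing the product against such an adapted norm forces its growth rate to be at most $\log r(s)$. For the second, strong monotonicity is decisive: an equilibrium with $r>1$ is linearly unstable precisely along $\psi\gg 0$, so its local stable manifold omits this order-direction and is transverse to the interior direction $e$, meeting $\ell_y$ in at most one point locally; since $X$ is separable there are at most countably many such unstable equilibria, contributing only countably many $s$. Collecting the jump values, the non-convergent values, and these stable-manifold slices yields a countable exceptional set, off which $\lambda_{\max}(y+se,F_0)\le\log r(s)\le 0$. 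Each slice being countable makes it $\mu$-null after translation, so $S_0$ is shy, and for $X=\mathbb{R}^d$ shyness is exactly Lebesgue-nullity. I expect the real work to lie in the stability dichotomy packaged as ``$r(s)\le 1$ except countably often'': turning the order-transversality of the dominant eigenvector $\psi\gg 0$ into a rigorous transversality of the a priori only Lipschitz, infinite-dimensional stable manifold to $\ell_y$, and simultaneously upgrading Hirsch-type quasiconvergence to honest convergence to a single equilibrium at the continuity values — this is the step where monotonicity, rather than mere positivity, must genuinely be used, and where the borderline case $r(s)=1$ (a degenerate Krein--Rutman gap) is most delicate.
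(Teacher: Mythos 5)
Your overall skeleton coincides with the paper's: shyness is obtained from a one-dimensional transverse measure along an order line (this is exactly the content of Proposition \ref{P:Borel-suffi-condi}, which is proved by precisely your push-forward of Lebesgue measure under $s\mapsto se$), and your ``asymptotic linearization'' bound $\lambda_{\max}\le\log r$ at a linearly stable limit is the paper's assertion in the proof of Theorem A, established there with the same tool (Lemma \ref{L:spe-radi-norm}, the spectral radius as an infimum of equivalent operator norms). The fatal problems are concentrated in the step you yourself flag as ``the real work'': showing that, off a countable set of parameters $s$, the orbit of $y+se$ converges to something linearly stable. First, your claim that the non-convergent parameters are countable ``by the same jump argument'' is unsupported: telescoping order-increments controls a family monotone in $s$, but the orbit $n\mapsto F_0^n(y+se)$ is not monotone in $n$, and no elementary order argument yields generic convergence for \emph{discrete-time} strongly monotone maps. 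This is a deep theorem; the paper imports it as the sharpened $C^1$-dynamics alternative (Lemma \ref{P:im-da}, from Wang--Yao), combined with Tak\'{a}\v{c}'s countability of the $\omega$-unstable sets along ordered arcs (Proposition \ref{P:unsta-count}) and the Pol\'{a}\v{c}ik--Tere\v{s}\v{c}\'{a}k arguments, to obtain Proposition \ref{T:im-pre}. Second, even where orbits do converge, for a discrete-time map they generically converge to \emph{cycles}, not to single equilibria; stable periodic orbits of minimal period greater than $1$ genuinely occur for strongly monotone maps. Your Krein--Rutman analysis at a fixed point $p(s)$ has no mechanism for cycles; the paper needs the uniform bound $m$ on stable periods (Remark \ref{R:bdd-per-F0}) precisely so that it can pass to $T=F_0^{m!}$ and turn all stable cycles into stable fixed points.

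Third, the countability of linearly unstable equilibria does not follow from separability of $X$: the equilibrium set of a strongly monotone map can be a continuum, and an equilibrium $p$ with $r(DF_0(p))>1$ need not be isolated (the dominant eigenvalue exceeding $1$ does not keep $1$ out of the rest of the spectrum, so the implicit function theorem does not apply); an uncountable family of such equilibria could a priori contribute uncountably many bad parameters, and your stable-manifold transversality argument, even if made rigorous, is local and does not exclude this. There are also smaller defects: $\lambda_{\max}$ in \eqref{E:map-lar-lya} is a $\liminf$, so rewriting it as a $\limsup$ of $\frac{1}{n}\log\|DF_0^n(x)e\|$ is not the defined quantity (what you actually need is the inequality $\lambda_{\max}\le\lambda_1$ of \eqref{E:map-lambda 1-geq-lambda max}, which is what the paper uses); your order-unit comparison $\|DF_0^n(x)\|\le C\,\|DF_0^n(x)e\|$ requires normality of the cone, which \textnormal{(H1)} does not assume; and the definition of shyness requires $S_0$ to be Borel, which you never verify (the paper does, via Borel-measurability of $x\mapsto\lambda_{\max}(x,F_0)$). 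In short, the reductions at the two ends of your argument match the paper's, but the middle --- generic convergence to stable limits with uniformly bounded minimal periods --- is assumed rather than proved, and that is exactly the part carrying the weight of the theorem.
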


Here, we call a subset $W\subset X$ \emph{shy} if there exists a nonzero compactly supported Borel measure $\mu$ on $X$ such that $\mu(x+W)=0$ for every $x\in X$. Roughly speaking, ``shyness" describes properties of interest occurs for ``almost nowhere" in an infinite-dimensional space from a probabilistic or measure-theoretic perspective. It is a natural generalization to separable Banach spaces of the notion of the Lebesgue measure  zero  for Euclidean spaces (see Section \ref{S:Not-pre}). In particular, on $\mathbb{R}^d$, it is equivalent to the notion of ``Lebesgue almost nowhere".  More properties of shy sets will be mentioned in Section 2.

\vskip 2mm
Motivated by Theorem A, we further consider the $C^1$-perturbed system $\{F_\epsilon^n\}_{n\in \mathbb{N}}$ ($F_\epsilon$ not necessarily monotone), and obtain the nonexistence of observable chaos for the $C^1$-perturbed systems.
More precisely, we present an additional standing hypothesis:

\vskip 3mm
\noindent \textbf{(H3)} Let $J=[-\epsilon_{0},\epsilon_{0}]\subset\mathbb{R}$, and $F:J\times X\to X; (\epsilon,x)\mapsto F_{\epsilon}(x)$ is a compact $C^{1}$-map, i.e., $DF(\epsilon,x)$ continuously depends on $(\epsilon,x)\in J\times X$.
\vskip 3mm

The following theorem reveals that the nonexistence of observable chaos of $F_0$ is robust under the $C^1$-perturbation.

\newtheorem*{thmb}{\textnormal{\textbf{Theorem B}}}
\begin{thmb}[$C^1$-robustness for nonexistence of observable chaos]
\emph{Assume that \textnormal{(H1)-(H3)} hold. Assume also $F_{0}$ is pointwise dissipative with an attractor $A$. Let $B_{1}\supset A$ be an open ball such that
\begin{equation}\label{E:C1-close}
\sup\{\Vert F_{\epsilon}x-F_{0}x\Vert +\Vert DF_{\epsilon}(x)-DF_{0}(x)\Vert : \epsilon\in J,\,x\in B_{1}\}
\end{equation}
sufficiently small.  Then there exists a closed bounded set $M$ (whose interior contains $A$), an integer $q>0$ and an $\epsilon_1>0$ such that, for each $\epsilon\in(-\epsilon_{1},\epsilon_{1})$,}

\textnormal{(i)} \emph{$F_{\ep}^{q}(M)\subset M$; and}

\textnormal{(ii)} \emph{The set}
$$S_{\ep}:=\{x\in M:\ \lambda_{\max}(x,F_{\epsilon})> 0\},$$
\emph{is shy in $M$. In particular, if $X=\mathbb{R}^{d}$, then $S_{\ep}$ is of Lebesgue measure  zero  in $M$.}
\end{thmb}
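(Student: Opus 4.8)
For (i), I would exploit that a pointwise dissipative compact $C^1$-map admits a global attractor $A$ that attracts every bounded set. Since $A\subset B_1$, I fix a closed bounded neighborhood $M$ with $A\subset\operatorname{int}(M)\subset M\subset B_1$ and, by attraction of the bounded set $\overline{B_1}$, an integer $q>0$ with $F_0^{q}(\overline{B_1})\subset\operatorname{int}(M)$; hence $F_0^{q}(M)\subset\operatorname{int}(M)$ with a definite gap to $\partial M$. Writing $F_\epsilon^{q}$ as a $q$-fold composition and using the uniform bound on $DF_0$ over $\overline{B_1}$, the smallness in \eqref{E:C1-close} forces $\sup_{x\in M}\norm{F_\epsilon^{q}x-F_0^{q}x}$ to be small, so $F_\epsilon^{q}(M)\subset M$ for all $\abs{\epsilon}<\epsilon_1$; this is (i). I then set $G_\epsilon:=F_\epsilon^{q}|_M$, a compact $C^1$ self-map of $M$ with compact attractor $A_\epsilon=\bigcap_{n\ge0}G_\epsilon^{n}(M)$, and record the scaling $\lambda_{\max}(x,F_\epsilon^{q})=q\,\lambda_{\max}(x,F_\epsilon)$ (valid on the bounded set $M$, where the derivatives are uniformly bounded), so that $S_\epsilon=\set{x\in M:\ \lambda_{\max}(x,G_\epsilon)>0}$ and it suffices to prove shyness for $G_\epsilon$.

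For (ii), the key point is that although $G_\epsilon$ need not be monotone, its derivative cocycle robustly inherits the strong positivity of $F_0$. Indeed $DG_0(x)=DF_0(F_0^{q-1}x)\circ\cdots\circ DF_0(x)$ is a composition of strongly positive compact operators, hence strongly positive; using compactness of these operators together with compactness of the attractor $A_0$, the cone-focusing is uniform, so there is a smaller closed cone $\tilde C$ with $\tilde C\setminus\set{0}\subset\operatorname{int}(C)$ and $DG_0(x)(C\setminus\set{0})\subset\operatorname{int}(\tilde C)$ for all $x\in A_0$. Strict cone-focusing is an open condition in the operator norm, so by \eqref{E:C1-close} small the nearby operators satisfy $DG_\epsilon(x)(\tilde C\setminus\set{0})\subset\operatorname{int}(\tilde C)$ uniformly for $x$ near $A_\epsilon$ and $\abs{\epsilon}<\epsilon_1$. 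Thus the derivative cocycle of $G_\epsilon$ strictly and uniformly preserves the cone $\tilde C$, which is precisely the linear hypothesis driving Theorem A.

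I would then isolate the engine of Theorem A as a statement about cone-preserving cocycles over a dissipative compact map and apply it to $G_\epsilon$: strict cone-preservation yields, via Hilbert-metric contraction and a Krein--Rutman argument for cocycles, a continuous one-dimensional dominant subbundle $E^1_\epsilon(x)\subset\operatorname{int}(\tilde C)$ carrying the top exponent and dominating its invariant complement. One then probes $S_\epsilon$ by Lebesgue measure on a short segment in a fixed strongly positive direction $v\gg0$ and shows that each translate meets $S_\epsilon$ in a null set, whence $S_\epsilon$ is shy in $M$. The reduction of $\lambda_{\max}(x,G_\epsilon)$ for arbitrary $x\in M$ to the cocycle over the compact attractor is handled by the attraction $\om(x)\subset A_\epsilon$ together with continuity of the top exponent under the spectral gap.

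The main obstacle is exactly this transfer. The original proof of Theorem A uses the order-preservation of the map itself—for instance, to force ordered, prevalently convergent orbits—whereas $G_\epsilon$ is not monotone, so the segment-probe in the direction $v\gg0$ no longer produces ordered orbits. The real work is therefore to recast Theorem A's shyness argument so that it invokes only the strict positivity of the \emph{derivative cocycle}, which is robust, rather than the global monotonicity of the map. A secondary technical point is to secure the cone-focusing \emph{uniformly}: here the compactness of $F_\epsilon$ and the passage to the compact attractor $A_\epsilon$ are essential, since strong positivity of a single operator need not survive perturbation away from a compact set.
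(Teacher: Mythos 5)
Your part (i) is essentially sound: positive invariance of a closed bounded neighborhood of $A$ under $F_{\ep}^{q}$ can indeed be extracted from attraction of bounded sets plus the $C^1$-closeness in \eqref{E:C1-close} (the paper instead obtains it, together with much more, from Tere\v{s}\v{c}\'{a}k's theorem, Lemma \ref{P:im-da-perturb}(i)). The genuine gap is in part (ii). Your plan is to replace monotonicity of the map by strict cone-invariance of the derivative cocycle, build a dominant one-dimensional bundle via Hilbert-metric contraction, and then conclude shyness of $S_{\ep}$ by a segment probe. But under your cone-focusing hypothesis such a dominated/invariant dominant bundle exists along \emph{every} orbit --- including the orbits of points in $S_{\ep}$ --- and it carries no information whatsoever about the sign of $\lambda_{\max}$. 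In the paper, the bound $\lambda_{\max}\le 0$ off a shy set is not a consequence of any linear (cocycle) structure: it comes from the nonlinear fact that the set of points whose $\om$-limit is a \emph{linearly stable} cycle of uniformly bounded minimal period is prevalent (Propositions \ref{T:im-pre} and \ref{T:im-pre-per}), which rests on Tak\'{a}\v{c}'s countability of unstable sets on ordered arcs and the sharpened dynamics alternative; only afterwards does Holmes' spectral-radius formula (Lemma \ref{L:spe-radi-norm}) convert linear stability of the limiting cycle into $\lambda_{1}\le 0$, hence $\lambda_{\max}\le 0$. Your proposed reduction to the attractor fails for the same reason: $A_{\ep}$ may perfectly well contain horseshoes with positive exponents (this is exactly the transient chaos discussed in Remark \ref{R:im-pre-tran-chaos}), so $\om(x)\subset A_{\ep}$ together with any ``continuity of the top exponent'' cannot force $\lambda_{\max}(x,F_{\ep}^{q})\le 0$. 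You correctly identified the obstacle --- that $F_{\ep}$ is not monotone --- but the resolution you propose (recasting the argument purely in terms of positivity of the derivative cocycle) cannot close.

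The missing idea, and the way the paper resolves the obstacle, is that the iterate $F_{\ep}^{q}$ \emph{itself}, not merely its derivative cocycle, is strongly monotone with respect to a smaller solid cone $C_{1}\subset{\rm Int}\,C$ on an absorbing open set $D_{1}\supset A$; this is Lemma \ref{P:im-da-perturb}(i), i.e.\ Tere\v{s}\v{c}\'{a}k's theorem, whose robustness under $C^1$-perturbation is precisely what hypothesis \eqref{E:C1-close} buys. Your uniform cone-focusing observation for $DF_{\ep}^{q}$ is in fact the natural first half of such an argument: strict cone-invariance of the derivative, integrated along order segments via $F(y)-F(x)=\int_{0}^{1}DF\bigl(x+s(y-x)\bigr)(y-x)\,ds$, upgrades to strong monotonicity of the nonlinear map with respect to $C_{1}$ --- but you turn away from this step toward a purely linear argument. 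Once $F_{\ep}^{q}$ is known to be strongly monotone on $D_{1}$, the entire monotone machinery applies to it: one obtains the perturbed improved prevalent dynamics \eqref{E:Fep-m-pre} (prevalence in $D_{1}$ of convergence to linearly stable cycles of minimal period at most $m_{1}$), then Theorem A's spectral-radius argument applies verbatim to $F_{\ep}^{q}$, showing that $S_{\ep,D_{1}}=\{x\in D_{1}:\lambda_{\max}(x,F_{\ep}^{q})>0\}$ is shy; finally $S_{\ep}\subset S_{\ep,D_{1}}$ (using, as you noted, that $\lambda_{\max}(x,F_{\ep})>0$ iff $\lambda_{\max}(x,F_{\ep}^{q})>0$ on $M$) together with Borel-measurability of $S_{\ep}$ completes the proof.
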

\vskip 2mm

Now, we focus on a strongly monotone semiflow $\Phi$ on $X$, we present the following standing hypothesis:
\vskip 2mm

\noindent $\textbf{(H2)}^{\prime}$ $\Phi:\RR\times X\to X$ is a $C^1$-smooth strongly monotone semiflow with compact orbit closures. For some fixed  $t_0>0$, the Fr\'echet derivative $D\Phi_{t_0}(x)$ is compact for any $x\in X$ and $D\Phi_{t_0}(e)$ is strongly positive for any equilibrium $e\in X$.

\newtheorem*{thmc}{\textnormal{\textbf{Theorem C}}}
\begin{thmc}[No observable chaos for semiflows]
\emph{Assume that \textnormal{(H1)} and $\textnormal{(H2)}^{\prime}$ hold. Then the set
$$S_{\Phi}:=\{x\in X:\ \lambda_{\max}(x,\Phi)> 0\},$$
is shy in $X$. In particular, if $X=\mathbb{R}^{d}$, then the set $S_{\Phi}$ is of Lebesgue measure  zero  in $\mathbb{R}^{d}$; and hence, $\Phi$ cannot have observable chaos.}
\end{thmc}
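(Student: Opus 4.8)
The plan is to reduce Theorem C to Theorem A by passing to the time-$t_0$ map $F_0:=\Phi_{t_0}$. By $(\textnormal{H2})'$ this $F_0$ is a compact $C^1$-map whose derivative $DF_0(x)=D\Phi_{t_0}(x)$ is compact, and the strong monotonicity of $\Phi$ makes $F_0$ a strongly monotone map: for $x<y$ one has $F_0(x)=\Phi_{t_0}(x)\ll\Phi_{t_0}(y)=F_0(y)$. Since every orbit closure of $\Phi$ is compact, each forward orbit $\{F_0^{\,n}(x)\}$ is precompact, which supplies the dissipativity and the attractor required to invoke Theorem A. To match the two exponents I would use the cocycle identity $D\Phi_t(x)=D\Phi_s(\Phi_{nt_0}(x))\circ DF_0^{\,n}(x)$ for $t=nt_0+s$, $0\le s<t_0$: continuity of the derivative together with the boundedness of $\overline{\{\Phi_r(x):r\ge0\}}$ gives $\sup_{0\le s\le t_0}\log\lVert D\Phi_s(\Phi_{nt_0}(x))\rVert=O(1)$, so that $\lambda_{\max}(x,\Phi)=\tfrac1{t_0}\lambda_{\max}(x,F_0)$. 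In particular the two exponents have the same sign and $S_\Phi=S_0$.

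The one point not covered by a verbatim citation of Theorem A is hypothesis (H2), which demands $DF_0(x)\gg0$ for every $x$, whereas $(\textnormal{H2})'$ supplies strong positivity of $D\Phi_{t_0}(e)$ only at equilibria. I expect this to be the main obstacle, but a harmless one. In the proof of Theorem A the full strength of (H2) is used only to produce the strong monotonicity of $F_0$ (through the cone mean-value inequality $F_0(y)-F_0(x)=\int_0^1 DF_0(x+\theta(y-x))(y-x)\,d\theta$); here that strong monotonicity is handed to us directly by $\Phi$. Strong positivity of the derivative then re-enters only through a Krein--Rutman spectral estimate at those equilibria that arise as $\omega$-limits of prevalently many orbits, which is exactly where $(\textnormal{H2})'$ guarantees it.

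Thus I would re-run the core of the argument of Theorem A for $F_0$. By the prevalent-convergence theory for strongly monotone semiflows with compact orbit closures, prevalently many $x$ have $\omega$-limit set a single stable equilibrium $e$; since $D\Phi_{t_0}(e)$ is compact and strongly positive, Krein--Rutman makes its spectral radius $r(e)$ a simple leading eigenvalue with positive eigenvector, and stability forces $r(e)\le1$. Letting the base point drift to $e$ along the orbit, the product $DF_0^{\,n}(x)=D\Phi_{t_0}(\Phi_{(n-1)t_0}x)\circ\cdots\circ D\Phi_{t_0}(x)$ is asymptotically governed by $D\Phi_{t_0}(e)$, whence $\lambda_{\max}(x,\Phi)\le\tfrac1{t_0}\log r(e)\le0$ for prevalently many $x$. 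Consequently the prevalent convergence set is contained in $\{x:\lambda_{\max}(x,\Phi)\le0\}=X\setminus S_\Phi$, so $S_\Phi$ lies in a shy set and is itself shy (and Lebesgue-null when $X=\RR^d$). The two delicate points to be carried out carefully are the uniform control of $DF_0^{\,n}(x)$ as the base point approaches $e$, so that $r(D\Phi_{t_0}(e))$ genuinely dominates the exponent, and the fact that prevalently the limiting equilibria are stable; both are precisely the estimates already secured in the proof of Theorem A.
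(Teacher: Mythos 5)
Your closing argument (prevalent convergence to a linearly stable equilibrium, followed by a spectral estimate at that equilibrium) is, in outline, exactly the paper's proof; but the route you take to it contains two claims that are false, and your stated reason for why the failure of (H2) is ``harmless'' is not the right one. First, compact orbit closures do \emph{not} supply pointwise dissipativity or an attractor: that requires a single bounded set attracting every point of $X$, which $\textnormal{(H2)}^{\prime}$ does not provide (think of a scalar ODE with stable equilibria marching off to infinity: every orbit closure is compact, yet no bounded set attracts all points). So Theorem A cannot be invoked for $F_0=\Phi_{t_0}$ even before one worries about (H2) --- this is precisely why Theorem C carries no dissipativity hypothesis. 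Second, and more seriously, your claim that in the proof of Theorem A ``the full strength of (H2) is used only to produce the strong monotonicity of $F_0$'' is wrong. Theorem A rests on Proposition \ref{T:im-pre}, which rests on the sharpened $C^1$-dynamics alternative (Lemma \ref{P:im-da}, quoted from \cite{WY21-2}); there, strong positivity of $DF_0(x)$ at \emph{every} $x$ is a standing hypothesis, feeding the exponential-separation/spectral machinery along arbitrary orbits, not merely the order-preservation of the map. Under $\textnormal{(H2)}^{\prime}$, which gives strong positivity only at equilibria, ``re-running the core of the argument of Theorem A'' is not possible; likewise, prevalent stability of the limiting equilibria is not ``already secured in the proof of Theorem A'' (Theorem A concerns stable \emph{cycles} of maps and needs (H2) everywhere).

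What rescues your third paragraph is that the prevalence statement you actually invoke there --- ``prevalent-convergence theory for strongly monotone semiflows with compact orbit closures'' --- is not Theorem A's machinery at all, but a separate, semiflow-specific theorem: Enciso--Hirsch--Smith \cite[Theorem 1]{EHS08}, whose hypotheses are exactly (H1) plus $\textnormal{(H2)}^{\prime}$. That citation is what the paper uses, and with it your argument becomes the paper's proof: prevalently $\omega(x,\Phi)=\{e\}$ with $e$ linearly stable, and the estimate at $e$ then gives $\lambda_{1}(x,\Phi)\le 0$, hence $\lambda_{\max}(x,\Phi)\le 0$ by \eqref{E:flow-lambda 1-geq-lambda max}. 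For that estimate, note that Krein--Rutman simplicity of the leading eigenvalue is never needed and does not by itself control the products $DF_0^{\,n}(x)$, since $\Vert D\Phi_{t_0}(e)\Vert$ may well exceed $1$ even when $r_{\sigma}(D\Phi_{t_0}(e))\le 1$; the paper's mechanism is Holmes' formula (Lemma \ref{L:spe-radi-norm}), which produces an equivalent norm in which the derivative near $e$ has norm $<1+2\tilde{\ep}$, after which the chain rule closes the argument --- this is the ``assertion'' in the proof of Theorem A that you correctly defer to. Finally, one point the paper checks and you omit: $S_{\Phi}$ must be shown to be Borel (via Borel measurability of $x\mapsto\lambda_{\max}(x,\Phi)$), since the property ``a subset of a shy set is shy'' is only available for Borel sets.
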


Theorems A-C conclude that there is no observable chaos in $C^1$-smooth strongly monotone systems, as well as for $C^1$-perturbed systems.
It is known that monotone systems cannot have chaotic attractors in the sense of topological transitivity (see e.g., \cite{H84,H85attractor,H19}). Here, Theorems A-C provide us a new point of view in measure-theoretic sense to understand such unstable complicated dynamics in monotone dynamical systems.

It deserves to point out that the aforementioned unobservable complicated dynamics in strongly monotone systems are indeed transient chaos (in the sense of Young \cite[Section 1, p.1442]{Y13}).
The nonexistence of observable chaos and the possible existence of transient chaos exhibit an interaction relationship between the generic/prevalent asymptotic behavior (in the whole space) and arbitrary complicated dynamics (in the codimension-1 invariant manifolds) in monotone systems (see also Remark \ref{R:im-pre-tran-chaos}).

Our approach for proving  Theorems A-C is motivated by the \emph{Sharpened $C^1$-dynamics alternative} (Lemma \ref{P:im-da}) and its $C^1$-robustness (Lemma \ref{P:im-da-perturb}) in our recent work \cite{WY21-2}. By appealing to the Sharpened $C^1$-dynamics alternative and its $C^1$-robustness, and a useful tool of upper (lower) $\omega$-unstable sets introduced by Tak\'{a}\v{c} \cite{Ta92}, we obtain the \emph{Improved Prevalent Dynamics} theorem (Proposition \ref{T:im-pre}) and its $C^1$-robustness (Proposition \ref{T:im-pre-per}). Together with a formula for the spectral radius of an operator (Lemma \ref{L:spe-radi-norm}), we utilize the improved prevalent dynamics theorem to accomplish our approach.

This paper is organized as follows. In Section \ref{S:Not-pre}, we agree on some notations, give relevant definitions and preliminary results. In Section \ref{S:im-pre}, we establish the improved prevalent dynamics theorem (Proposition \ref{T:im-pre}) and its $C^1$-robustness (Proposition \ref{T:im-pre-per}), which turn out to be crucial in the proof our main results (Theorems A-C). Section \ref{S:tran-chaos} is devoted to the proof of the main results.

\section{Notations and Preliminary results}\label{S:Not-pre}

Let $(X,\|\cdot\|)$ be a Banach space and $\mathscr{L}(X)$ the space of all bounded linear operators which map $X$ into itself. A cone $C$ is a closed convex subset of $X$ such that $\lambda C\subset C$ for all $\lambda>0$ and $C\cap(-C)=\{0\}$. $(X,C)$ is said to be a strongly ordered Banach space if $C$ has nonempty interior ${\rm Int}C$. For $x,y\in X$, we write $x\leq y$ if $y-x\in C$, $x<y$ if $y-x\in C\setminus\{0\}$, $x\ll y$ if $y-x\in{\rm Int}C$. The reversed signs are used in the usual way.  A subset $J^{\prime}\subset X$ is called a \emph{simply ordered, open arc} if there is an increasing homeomorphism $h$ from an open interval $I\subset\mathbb{R}$ onto $J^{\prime}$ ($h$ is \emph{increasing} if $\xi_{1}<\xi_{2}$ implies $h\xi_{1}\ll h\xi_{2}$). Let $D\subset X$ be an open subset.  A mapping $h:D\to D$ is \emph{monotone} (\emph{strongly monotone}), if $x\leq y$ ($x<y$) implies $hx\leq hy$ ($hx\ll hy$). Similarly, a semiflow $\Phi:\RR\times X\to X$ is \emph{monotone} (\emph{strongly monotone}), if $x\leq y$ ($x<y$) implies $\Phi_{t}(x)\leq \Phi_{t}(y)$ ($\Phi_{t}(x)\ll \Phi_{t}(y)$) for all $t\geq0$ ($t>0$).

In this paper, we sometimes also need to deal with arguments for another solid cone $C_1(\subset C)$. Hence, for the sake of no confusion, we write $\leq_1, <_1, \ll_1$ as the corresponding order relation induced by the cone $C_1$ throughout the paper.

Let $D\subset X$, we denote by $\overline{D}$ the closure of $D$. For a continuous map $h:X\to X$, the orbit of $x\in X$ is denoted by $O(x,h)$=$\{h^{n}x:n\geq0\} $. The $\omega$-limit set of $x\in X$ is $\omega(x,h)=\mathop{\bigcap}\limits_{k\geq0}\overline{\{h^{n}x:n\geq k\}}$. We say that $h$ is $\om$-compact in a subset $Y$ of $X$, if $O(x,h)$ is relatively compact for each $x\in Y$ and $\mathop{\bigcup}\limits_{x\in Y}\om(x,h)$ is relatively compact. Given any $x\in X$, we define the \emph{upper} and  \emph{lower $\omega$-limit sets} of $x$ by
$$\omega_{+}(x,h):=\bigcap_{\substack{u\in X\\u\gg x}}\overline{\bigcup_{\substack{y\in X\\ u\geq y>x}}\omega(y,h)} \quad and\quad
\omega_{-}(x,h):=\bigcap_{\substack{u\in X\\u\ll x}}\overline{\bigcup_{\substack{y\in X\\ u\leq y<x}}\omega(y,h)},$$
respectively. If $h$ is $\omega$-compact in some neighbourhood of $x$ in $X$, $\omega_{+}(x,h)$ (resp. $\omega_{-}(x,h)$) is non-empty and compact  (see Tak\'{a}\v{c} \cite[Proposition 3.1]{Ta92}). Write
$$\mathcal{U}_{+}(h)=\{x\in X:\omega_{+}(x,h)\neq \omega(x,h)\} \quad and\quad \mathcal{U}_{-}(h)=\{x\in X:\omega_{-}(x,h)\neq \omega(x,h)\}$$
as the \emph{upper $\omega$-unstable set} and \emph{lower $\omega$-unstable set} in $X$, respectively. Moreover, we denote the set $\mathcal{U}_2(h)=\mathcal{U}_{+}(h)\cap\mathcal{U}_{-}(h)$.

Now, we give a structure proposition of $\mathcal{U}_{-}$ ($\mathcal{U}_{+}$, resp.), which is useful in Section \ref{S:im-pre}.

\begin{prop}\label{P:unsta-count}
Let $D\subset X$ be an open subset and $h:D\to D$ be strongly monotone. Assume that the set
$$D_0=\{x\in D:\ O(x,h)\text{ is relatively compact }\}$$
is dense in $D$. Let $J^{\prime}\subset D_0$ be a simply ordered, open arc. Then the set $J^{\prime}_{-}=J^{\prime}\cap\mathcal{U}_{-}(h)$ is at most countable. A corresponding result holds for $\mathcal{U}_{+}(h)$.
\end{prop}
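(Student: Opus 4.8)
\emph{Plan.} Since $\gamma\colon I\to J'$ is an increasing homeomorphism of an open interval $I\subseteq\mathbb{R}$, it suffices to prove that $I_-:=\{\xi\in I:\gamma(\xi)\in\mathcal{U}_-(h)\}$ is at most countable. The first step is to re-express the lower $\omega$-limit set using only lower bounds lying on the arc. Fix $\xi\in I$ and write $x=\gamma(\xi)$. For $\xi_1<\xi$ put $L(\xi_1):=\overline{\bigcup_{\gamma(\xi_1)\le y<x}\omega(y,h)}$. Because $\gamma(\xi_1)\to x$ as $\xi_1\uparrow\xi$ and $\mathrm{Int}\,C$ is open, for every $u\ll x$ one has $\gamma(\xi_1)\gg u$ once $\xi_1$ is close enough to $\xi$; hence the arc points are cofinal among the lower bounds of $x$, and $\bigcap_{\xi_1<\xi}L(\xi_1)\subseteq\omega_-(x,h)$. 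I will also use the containment $\omega_-(x,h)\subseteq\omega(x,h)$; this is the one genuinely analytic input, and is where I expect the main difficulty (see below).

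\emph{Reduction to a gap-counting statement.} Let $x=\gamma(\xi)$ with $\xi\in I_-$. By $\omega_-(x,h)\subseteq\omega(x,h)$ and $\omega_-(x,h)\neq\omega(x,h)$ I may choose $q\in\omega(x,h)\setminus\omega_-(x,h)$. Since $q\notin\bigcap_{\xi_1<\xi}L(\xi_1)$, there is $\xi_1<\xi$ with $q\notin L(\xi_1)$; as $L(\xi_1)$ is closed, separability of $X$ lets me select, from a fixed countable family $\mathcal{B}$ of balls with centres in a countable dense set and rational radii, a ball $B\in\mathcal{B}$ with $q\in B$ and $\overline{B}\cap L(\xi_1)=\emptyset$. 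Then $\omega(\gamma(\xi'),h)\cap B=\emptyset$ for every $\xi'\in[\xi_1,\xi)$ (because $\gamma(\xi')\in[\gamma(\xi_1),x)$ makes $\omega(\gamma(\xi'),h)\subseteq L(\xi_1)$), while $q\in\omega(\gamma(\xi),h)\cap B$. Writing $A_B:=\{\eta\in I:\omega(\gamma(\eta),h)\cap B\neq\emptyset\}$, this says $\xi\in A_B$ and $[\xi_1,\xi)\cap A_B=\emptyset$; that is, $\xi$ is a point of $A_B$ possessing a nonempty open gap immediately to its left.

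\emph{Countability.} For a fixed $B\in\mathcal{B}$ the set of such left-gap points of $A_B$ is at most countable: to each I assign an open gap $(\xi-\delta_\xi,\xi)$ disjoint from $A_B$; if $\xi<\xi'$ are two such points, then since $\xi\in A_B$ cannot lie in the gap of $\xi'$, the left endpoint of the gap of $\xi'$ is $\ge\xi$, so the two gaps are disjoint, and a pairwise disjoint family of nonempty open intervals in $\mathbb{R}$ is countable. Ranging over the countable family $\mathcal{B}$ shows that $I_-$ is contained in a countable union of such sets, hence is at most countable, whence $J'_-=\gamma(I_-)$ is at most countable. The assertion for $\mathcal{U}_+(h)$ follows by the order-reversing symmetry, replacing lower bounds by upper bounds, $\omega_-$ by $\omega_+$, and left gaps by right gaps.

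\emph{Main obstacle.} The crux is the containment $\omega_-(x,h)\subseteq\omega(x,h)$, i.e.\ excluding a point of the lower limit set that does not belong to $\omega(x,h)$. I would derive it from the upper semicontinuity of $y\mapsto\omega(y,h)$ furnished by the $\omega$-compactness in a neighbourhood of $x$ (the same hypothesis underlying Tak\'a\v{c} \cite[Proposition 3.1]{Ta92}): any $q\in\omega_-(x,h)$ is a limit of points $z_k\in\omega(y_k,h)$ with $\gamma(\xi_1^{(k)})\le y_k<x$ and $\xi_1^{(k)}\uparrow\xi$, so the $y_k$ are trapped in the shrinking order intervals $[\gamma(\xi_1^{(k)}),x]$ and converge to $x$, and upper semicontinuity then forces $q\in\omega(x,h)$. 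Making the squeeze $y_k\to x$ rigorous is the delicate step, since it requires that these order intervals have vanishing diameter as $\gamma(\xi_1^{(k)})\uparrow x$ (automatic when the cone is normal, and otherwise controllable through the order structure together with the strong monotonicity $h^n\gamma(\xi_1)\ll h^n y\ll h^n x$). Granting this, the rest is the elementary gap-counting above.
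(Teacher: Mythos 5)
The paper offers no proof of its own here: it simply cites Tak\'a\v{c} \cite[Corollary 3.4]{Ta92}, so your attempt must stand on its own. Its combinatorial skeleton is sound: the cofinality of arc points among lower bounds (giving $\bigcap_{\xi_1<\xi}L(\xi_1)\subseteq\omega_-(x,h)$), the selection of a separating ball from a countable family, and the disjoint-left-gaps counting are all correct. The fatal problem is exactly the step you flag as the crux: the containment $\omega_-(x,h)\subseteq\omega(x,h)$ is \emph{false} in general, and so is the upper semicontinuity of $y\mapsto\omega(y,h)$ from which you propose to derive it. Take $h$ to be the time-one map of $\dot u=u(1-u^2)$ on $X=D=\mathbb{R}$: this is a $C^1$ strongly monotone map, every orbit is relatively compact (so $D_0=\mathbb{R}$, which is itself a simply ordered open arc), and at $x=0$ one has $\omega(0,h)=\{0\}$ while $\omega(y,h)=\{-1\}$ for every $y<0$, hence $\omega_-(0,h)=\{-1\}$. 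Thus $\omega_-(x,h)$ can be \emph{disjoint} from $\omega(x,h)$; correspondingly, the points $z_k=-1\in\omega(y_k,h)$ with $y_k\uparrow 0$ converge to a point outside $\omega(0,h)$ even though $y_k\to x$ in norm, so no squeeze or cone-normality argument can rescue the claim. The $\omega$-compactness hypothesis only makes $\bigcup_y\omega(y,h)$ precompact; it does not make $y\mapsto\omega(y,h)$ upper semicontinuous --- indeed the sets $\mathcal{U}_{\pm}(h)$ exist precisely to measure the failure of that semicontinuity.

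Note that what your gap-counting actually needs is weaker than what you claimed: for each $x=\gamma(\xi)\in J'\cap\mathcal{U}_-(h)$ you need some $q\in\omega(x,h)\setminus\omega_-(x,h)$, i.e. $\omega(x,h)\not\subseteq\omega_-(x,h)$. This does hold in the example above (take $q=0$), and statements of this kind --- that $\omega_-(x,h)\neq\omega(x,h)$ forces the two sets apart --- are exactly the limit-set-dichotomy machinery for strongly monotone maps that underlies Tak\'a\v{c}'s Corollary 3.4. But that is the genuine analytic content of the result, and your proof does not supply it; instead it substitutes a containment that fails already in one dimension. As written, the argument therefore has a real gap at its central step. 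It would become a correct (and self-contained, hence genuinely different from the paper's citation) proof if the false containment $\omega_-\subseteq\omega$ were replaced by a proof, or an invocation from \cite{Ta92}, of the fact that $x\in\mathcal{U}_-(h)$ implies $\omega(x,h)\setminus\omega_-(x,h)\neq\emptyset$; the rest of your argument can then be kept verbatim.
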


\begin{proof}
See \cite[Corollary 3.4]{Ta92}.
\end{proof}

A point $x\in X$ is called a \emph{periodic point of $h$}, if $h^{p}x=x$ for some integer $p\geq1$. $p$ is then a \emph{period} of $x$. Moreover, if $h^{l}x\neq x$ for $l=1,2,\cdots,p-1$, $x$ is said to be \emph{$p$-periodic}. $p$ is the \emph{minimal period} of $x$. Particularly, if $p=1$, $x$ is a \emph{fixed point} of $h$. $K$ is a \emph{cycle} if $K=O(x,h)$ for some periodic point $x$. For a $C^1$-smooth map $h$ and $x\in X$, we define
\begin{equation}\label{E:map-prin-lya}
\lambda_{1}(x,h)={\limsup_{n \to +\infty}\frac{\log\|Dh^{n}(x)\|}{n}}
\end{equation}
as the  \emph{principal Lyapunov exponent of x} (with respect to $h$). We say a cycle $K=O(x,h)$ (of minimal period $p$) is \emph{linearly stable} if $r_{\sigma}(Dh^p(x))\leq1$, where $r_{\sigma}(Dh^p(x))$ is the spectral radius of $Dh^p(x)$ ($x$ is also said to be a linearly stable $p$-periodic point of $h$). This includes the neutral case (the principal eigenvalue on the unit circle). It is equivalent to $\lambda_{1}(x,h)\leq0$ (see e.g., Hess and Pol\'{a}\v{c}ik \cite[p.1316-1317]{PH93}). Particularly, if $p=1$, $x$ is called a linearly stable fixed point of $h$. Let $B\subset X$. $k$ is said to be a \emph{stable period} for the restriction ${h|}_{B}$ if there is a linearly stable $k$-periodic point $x$ of $h$ such that the orbit $O(x,h)\subset B$. If $B=X$, we simply say that $k$ is a stable period of $h$. For brevity, we hereafter say $\omega(x,h)$ is a linearly stable cycle (of minimal period $p$), if $\omega(x,h)$ is a linearly stable cycle (of minimal period $p$) of $h$.

A continuous map $h:X\to X$ is \emph{pointwise dissipative} if there is a bounded subset $B\subset X$ such that $B$ attracts each point of $X$. An invariant set $A$ is called an \emph{attractor} of $h$ if $A$ is the maximal compact invariant set which attracts each bounded subset of $X$. If $h:X\to X$ is compact and pointwise dissipative, then there is a connected attractor $A$ of $h$ (see e.g., \cite[Theorem 2.4.7]{Ha88}).

For a semiflow $\Phi:\RR\times X\to X$, the orbit of $x\in X$ is $O(x,\Phi)$=$\{\Phi_{t}x:t\geq0\}$. The $\omega$-limit set of $x\in X$ is $\omega(x,\Phi)=\mathop{\bigcap}\limits_{s\geq0}\overline{\{\Phi_{t}x:t\geq s\}}$.  A point $e\in X$ is called an \emph{equilibrium} if $\Phi_{t}(e)=e$ for any $t\geq0$. We call a semiflow $\Phi$ is $C^1$-smooth if the Fr\'echet derivative $D\Phi_{t}(x)$ with respect to the state variable $x$ exists for each $x\in X$ and $t>0$, and $x\mapsto D\Phi_{t}(x)$ is continuous. For a $C^1$-smooth semiflow $\Phi$ and $x\in X$, we define the  \emph{principal Lyapunov exponent of x} (with respect to $\Phi$) as
\begin{equation}\label{E:flow-prin-lya}
\lambda_{1}(x,\Phi)={\limsup_{n \to +\infty}\frac{\log\|D\Phi_{t}(x)\|}{t}}.
\end{equation}
An equilibrium $e$ is called \emph{linearly stable}, if $\lambda_{1}(e,\Phi)\leq0$.

Now, we give a helpful lemma on the minimal period of linearly stable periodic points.
\begin{lem}\label{L:cycle-fixed pt}
Let $h:X\to X$ be a $C^1$-map. If $z$ is a linearly stable $k$-periodic point of $h^q$, then $z$ is a linearly stable periodic point of $h$ of minimal period at most $kq$.
\end{lem}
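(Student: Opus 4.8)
The plan is to reduce the statement to the minimal period of $z$ under $h$ and then transfer linear stability from the high iterate $h^{qk}$ down to that minimal period via the multiplicativity of the spectral radius under powers. The hypothesis says $z$ is a $k$-periodic point of $h^q$, so $(h^q)^k(z)=h^{qk}(z)=z$; hence $z$ is automatically a periodic point of $h$ and its minimal period $p$ divides $qk$, giving at once $p\le qk$. This already establishes the bound on the minimal period, and the whole remaining content of the lemma is to check linear stability of $z$ as a $p$-periodic point of $h$, i.e. $r_{\sigma}(Dh^{p}(z))\le 1$.

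For this I would write $qk=pm$ with $m\ge 1$ and compute $Dh^{qk}(z)$ by the chain rule along the orbit. The essential observation is that the orbit is genuinely $p$-periodic, so $h^{pj}(z)=z$ for every $j\ge 0$; consequently every factor appearing in $Dh^{qk}(z)=Dh^{pm}(z)$ is evaluated at the same base point $z$ and equals $L:=Dh^{p}(z)$. Thus $Dh^{qk}(z)=L^{m}$. Now I invoke the elementary identity $r_{\sigma}(L^{m})=\big(r_{\sigma}(L)\big)^{m}$, which follows directly from Gelfand's formula $r_{\sigma}(L)=\lim_{n\to\infty}\|L^{n}\|^{1/n}$, since $\|L^{mn}\|^{1/n}=\big(\|L^{mn}\|^{1/(mn)}\big)^{m}\to \big(r_{\sigma}(L)\big)^{m}$. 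Combining this with the assumption $r_{\sigma}(Dh^{qk}(z))\le 1$ yields $\big(r_{\sigma}(L)\big)^{m}\le 1$, and since $m\ge 1$ and $r_{\sigma}(L)\ge 0$ this forces $r_{\sigma}(Dh^{p}(z))=r_{\sigma}(L)\le 1$. That is exactly the asserted linear stability of $z$ as a periodic point of $h$ of minimal period $p\le qk$.

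The only point requiring care—more a matter of bookkeeping than a genuine obstacle—is to use the full periodicity $h^{pj}(z)=z$ rather than merely $h^{qk}(z)=z$ when unwinding the chain rule, so that all the differential factors coincide and the product collapses to a clean power $L^{m}$; if one works with a nonminimal period the factors need not be equal and the argument breaks down. Equally, one must conclude stability at the \emph{minimal} period $p$ (not at the period $qk$) to match the definition of a linearly stable periodic point used in the paper, which is precisely what the spectral-radius identity delivers.
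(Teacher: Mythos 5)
Your proof is correct and complete: the minimal period $p$ of $z$ under $h$ divides $qk$ (so $p\le qk$), the chain rule along the genuinely $p$-periodic orbit collapses $Dh^{qk}(z)$ into the clean power $\bigl(Dh^{p}(z)\bigr)^{m}$ with $qk=pm$, and Gelfand's formula gives $r_{\sigma}\bigl(\bigl(Dh^{p}(z)\bigr)^{m}\bigr)=\bigl(r_{\sigma}(Dh^{p}(z))\bigr)^{m}\le 1$, forcing $r_{\sigma}(Dh^{p}(z))\le 1$, which is exactly linear stability at the minimal period as defined in the paper. Note that the paper itself offers no in-text argument for this lemma --- it only cites the claim inside the proof of \cite[Corollary 2.5]{WY21-2} --- so your write-up is a self-contained rendering of what is presumably the same standard argument; your attention to the two genuine subtleties (evaluating all chain-rule factors at the same base point $z$ via the \emph{minimal} period, and concluding stability at $p$ rather than at $qk$) is precisely what the outsourced citation glosses over.
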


\begin{proof}
See the claim in the proof of \cite[Corollary 2.5]{WY21-2}.
\end{proof}

In the following, we introduce the concept of {\it largest Lyapunov exponent} by Young \cite[Section 1, p.1441]{Y13}. For a $C^1$-smooth map $h:X\to X$ and $x\in X$, we define
\begin{equation}\label{E:map-lar-lya}
\lambda_{\max}(x,h)={\liminf_{n \to +\infty}\frac{\log\|Dh^{n}(x)\|}{n}}
\end{equation}
as the \emph{largest Lyapunov exponent of x} (with respect to $h$).
Similarly, the \emph{largest Lyapunov exponent of x} (with respect to a semiflow $\Phi$) is defined as
\begin{equation}\label{E:flow-lar-lya}
\lambda_{\max}(x,\Phi)={\liminf_{t \to +\infty}\frac{\log\|D\Phi_{t}(x)\|}{t}}.
\end{equation}
It is easy to see from \eqref{E:map-prin-lya}-\eqref{E:flow-lar-lya} that
\begin{equation}\label{E:map-lambda 1-geq-lambda max}
\lambda_{1}(x,h)\geq\lambda_{\max}(x,h)
\end{equation}
and
\begin{equation}\label{E:flow-lambda 1-geq-lambda max}
\lambda_{1}(x,\Phi)\geq\lambda_{\max}(x,\Phi).
\end{equation}

Let $M$ be a compact domain (the closure of a non-empty connected open set) of $\mathbb{R}^{d}$ or a finite-dimensional Riemannian manifold, and $h: M\to M$ be a ${C}^{1}$-map. $h$ is said to have \emph{observable chaos} if $\lambda_{\max}(\cdot,h)>0$ holds on a positive Lebesgue measure set (see, Young
\cite[Section 1, p.1441]{Y13},\cite[Section 4]{Y13B},\cite[Section 3]{Y18}).

Similarly, We call  a $C^1$-smooth semiflow $\Phi:\RR\times M\to M$ have \emph{observable chaos}, if $\lambda_{\max}(\cdot,\Phi)>0$ holds on a positive Lebesgue measure set.

In what follows, we introduce the definition and some significant properties of \emph{prevalence} and \emph{shyness}. A Borel subset $W\subset X$ is called \emph{shy} if there exists a nonzero compactly supported Borel measure $\mu$ on $X$ such that $\mu(W+x)=0$ for every $x\in X$. A Borel subset $W\subset X$ is \emph{prevalent (in $X$)} if its complement $X\backslash W$ is shy. Given a Borel subset $V\subset X$, we say that a Borel subset $W\subset X$ is \emph{prevalent in} $V$ if $V\setminus W$ is shy.

Shyness has the following fundamental properties (\cite{HSY92,HK10}):

(i) Every Borel subset of a shy set is shy;

(ii) Every translation of a shy set is shy;

(iii) No nonempty open set is shy;

(iv) Every countable union of shy sets is shy;

(v) In finite-dimensional spaces, a Borel set $W$ is shy if and only if it has Lebesgue measure

\quad \ \ \,zero.

At the end of this section, we present a sufficient condition which guarantees a Borel subset $W\subset X$ to be shy.
\begin{prop}\label{P:Borel-suffi-condi}
Let $W\subset X$ be a Borel subset and assume that there exists $v\gg0$ such that $L\cap W$ is countable for every straight line $L$ parallel to $v$. Then $W$ must be shy in $X$.
\end{prop}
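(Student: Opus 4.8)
The plan is to produce an explicit witnessing measure concentrated on a short segment in the direction $v$, and then to convert the hypothesis ``every line parallel to $v$ meets $W$ in a countable set'' directly into the required identity $\mu(W+x)=0$. Since $v\gg 0$ forces $v\neq 0$, the segment $S:=\{tv:\,t\in[0,1]\}$ is a nondegenerate compact arc, and I would take $\mu$ to be the image of one-dimensional Lebesgue measure $\lambda$ on $[0,1]$ under the continuous injection $\iota:[0,1]\to X$, $\iota(t)=tv$. This $\mu$ is manifestly a nonzero (total mass $1$) Borel measure with compact support $S$, hence an admissible candidate in the definition of shyness.

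Next I would verify the defining property $\mu(W+x)=0$ for an arbitrary $x\in X$. Because $W$ is Borel and translation by $x$ is a homeomorphism, $W+x$ is Borel, so $\mu(W+x)$ is well-defined; moreover $\iota$ is continuous, so $\iota^{-1}(W+x)$ is a Borel (hence $\lambda$-measurable) subset of $[0,1]$. By the change-of-variables formula for a pushforward measure,
$$\mu(W+x)=\lambda\bigl(\iota^{-1}(W+x)\bigr)=\lambda\bigl(\{t\in[0,1]:\,tv-x\in W\}\bigr).$$
The set $\{tv-x:\,t\in\RR\}$ is exactly the straight line $L_x$ through $-x$ parallel to $v$, so by hypothesis $L_x\cap W$ is countable. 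Since $v\neq 0$, the map $t\mapsto tv-x$ is injective, whence $\{t\in[0,1]:\,tv-x\in W\}$ injects into the countable set $L_x\cap W$ and is therefore itself countable, so it has $\lambda$-measure zero. Thus $\mu(W+x)=0$ for every $x\in X$, which is precisely the statement that $W$ is shy.

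I do not expect a serious obstacle here: the whole argument rests on the single observation that a one-dimensional transverse measure in a fixed direction ``sees'' each translate of $W$ only along a single line, on which $W$ is negligible by assumption. The only care needed is routine measure-theoretic bookkeeping---confirming that $\mu$ is nonzero and compactly supported, that $W+x$ remains Borel under translation, and that the pushforward identity applies---none of which uses more than the continuity and injectivity of $\iota$. It is worth noting that the full strength $v\gg 0$ is not actually used in this proposition; only $v\neq 0$ enters, the positivity being relevant solely for how the proposition is later invoked.
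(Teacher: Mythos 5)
Your proof is correct, and it is essentially the same argument as the one the paper relies on: the paper simply cites Enciso--Hirsch--Smith \cite[Lemma 1]{EHS08}, whose proof is exactly your construction of the witnessing measure as one-dimensional Lebesgue measure pushed forward onto the segment $\{tv:\,t\in[0,1]\}$. Your closing observation is also accurate---only $v\neq 0$ is needed for the lemma itself, with $v\gg 0$ mattering only for how it is applied via Proposition \ref{P:unsta-count}.
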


\begin{proof}
See Enciso, Hirsch and Smith \cite[Lemma 1]{EHS08}.
\end{proof}


\section{Improved prevalent dynamics/and its $C^1$-robustness for $C^1$-perturbed Systems}\label{S:im-pre}

In this section, we will focus on the proof of the \emph{Improved prevalent dynamics} theorem (Proposition \ref{T:im-pre}), as well as its $C^1$-robustness for $C^1$-perturbed systems (Proposition \ref{T:im-pre-per}).
Proposition \ref{T:im-pre} and Proposition \ref{T:im-pre-per} will play a crucial role in our approach for the nonexistence of observable chaos and its $C^1$-robustness in Section \ref{S:tran-chaos}.

\begin{prop}\label{T:im-pre}
{\rm (Improved Prevalent Dynamics).} Assume that \textnormal{(H1)-(H2)} hold. Assume also $F_{0}$ is pointwise dissipative with an attractor $A$. Then there is an integer $m>0$ such that the set
$$Q_{0}:=\{x\in X:\omega(x,F_{0})\text{ is a linearly stable cycle of minimal period at most }m\}$$
is prevalent in $X$. In particular, if $X=\mathbb{R}^{d}$, then the set $Q_0$ is of full Lebesgue measure in $X$.
\end{prop}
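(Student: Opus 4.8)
The plan is to prove that the complement $X\setminus Q_0$ is shy; prevalence of $Q_0$ is then immediate by definition, and the $\mathbb{R}^d$ statement follows from property (v) of shy sets. By Proposition \ref{P:Borel-suffi-condi} it suffices to fix one direction $v\gg0$ (any $v\in\mathrm{Int}\,C$) and show that $L\cap(X\setminus Q_0)$ is at most countable for every straight line $L$ parallel to $v$, together with a check that $X\setminus Q_0$ is Borel. The geometric input is that each such line $L=\{a+sv:s\in\mathbb{R}\}$ is a simply ordered, open arc: since $v\gg0$, $s_1<s_2$ gives $a+s_1v\ll a+s_2v$, so $s\mapsto a+sv$ is an increasing homeomorphism of $\mathbb{R}$ onto $L$. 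Because $F_0$ is compact and pointwise dissipative, every orbit is relatively compact, so the set $D_0$ of Proposition \ref{P:unsta-count} equals $X$ and that proposition is applicable along $L$.

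The dynamical input I would draw from the Sharpened $C^1$-dynamics alternative (Lemma \ref{P:im-da}): it should furnish a single integer $m>0$, together with the dichotomy that for every $x\in X$ either $\omega(x,F_0)$ is a linearly stable cycle of minimal period at most $m$ (that is, $x\in Q_0$), or $x$ lies in the unstable set $\mathcal{U}_+(F_0)\cup\mathcal{U}_-(F_0)$. Granting this, the complement satisfies
$$X\setminus Q_0\ \subseteq\ \mathcal{U}_+(F_0)\cup\mathcal{U}_-(F_0).$$
Fixing $L$ as above and applying Proposition \ref{P:unsta-count} to the strongly monotone map $F_0$ (with $D_0=X$ and $J'=L$), both $L\cap\mathcal{U}_+(F_0)$ and $L\cap\mathcal{U}_-(F_0)$ are at most countable, hence so is $L\cap(X\setminus Q_0)$. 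Proposition \ref{P:Borel-suffi-condi} then gives that $X\setminus Q_0$ is shy, as desired.

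The step I expect to be the genuine obstacle is the production of the \emph{uniform} period bound $m$ inside Lemma \ref{P:im-da}; the mere dichotomy ``converges to a linearly stable cycle, or is an unstable point'' follows the classical pattern, but without a uniform control on the period the set $\{x:\omega(x,F_0)\text{ is a linearly stable cycle}\}$ cannot be written with a fixed $m$. I would obtain such an $m$ by passing to a power: choosing a stable period $q$ for $F_0$ near $A$ and working with $g=F_0^q$, the generic $\omega$-limit sets of $g$ collapse to linearly stable \emph{fixed} points $z$ of $g$, and Lemma \ref{L:cycle-fixed pt} (with $k=1$) then returns $z$ as a linearly stable periodic point of $F_0$ of minimal period at most $q$, so that $m:=q$ works.

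Two routine but non-negotiable verifications remain. First, the Borel measurability of $X\setminus Q_0$, which is needed before Proposition \ref{P:Borel-suffi-condi} can be invoked. Second, in the power-map approach, the fact that $g=F_0^q$ inherits strong monotonicity, compactness, strong positivity of its derivative, and relative compactness of orbits from (H1)-(H2), via the chain rule and composition of strongly positive operators, so that Proposition \ref{P:unsta-count} is legitimately applicable to $g$ as well.
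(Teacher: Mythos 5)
Your skeleton matches the paper's proof exactly: both arguments establish shyness of $X\setminus Q_{0}$ by combining the Sharpened $C^1$-dynamics alternative (Lemma \ref{P:im-da}) with countability of unstable sets along ordered arcs (Proposition \ref{P:unsta-count}) and the line criterion (Proposition \ref{P:Borel-suffi-condi}); the paper works with $\mathcal{U}_{2}(F_{0})=\mathcal{U}_{+}(F_{0})\cap\mathcal{U}_{-}(F_{0})$ where you use the union, which is immaterial. Nevertheless, two of your steps do not hold up as written.

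First, the Borel measurability of $Q_{0}$, which you defer as ``routine,'' is not routine, and it is exactly where the paper has to do genuine work. Shyness and Proposition \ref{P:Borel-suffi-condi} are formulated for Borel sets, and the defining condition ``$\omega(x,F_{0})$ is a linearly stable cycle of minimal period at most $m$'' has no evident Borel description. The paper closes this by proving $Q_{0}=Q_{0}^{\prime}:=\{x\in X:\omega(x,F_{0}^{m!})\text{ is a linearly stable fixed point}\}$ --- the inclusion $Q_{0}^{\prime}\subset Q_{0}$ resting on the fact that \emph{all} stable periods of $F_{0}$ are bounded by $m$ (Remark \ref{R:bdd-per-F0}) --- and then applying Lemma \ref{L:borel-lsequi} to the fixed-point reformulation. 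Remark \ref{R:Wang et al.} explicitly singles out this measurability argument as one of the contributions of the proposition, so leaving it unaddressed is a real gap, not a formality.

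Second, your proposed re-derivation of the uniform bound $m$ is both unnecessary and unsound. Unnecessary, because Lemma \ref{P:im-da}, quoted from \cite{WY21-2}, already supplies $m$ as part of its statement, so you may simply use it. Unsound, because fixing one stable period $q$ near $A$ and passing to $g=F_{0}^{q}$ does not make the generic $\omega$-limit sets of $g$ collapse to linearly stable \emph{fixed} points of $g$: that would require every stable period of $F_{0}$ to divide $q$, which is not implied by $q$ merely being a stable period. The correct input is the uniform boundedness of all stable periods (Remark \ref{R:bdd-per-F0}, i.e.\ \cite[Corollary 2.5]{WY21-2}), which is what licenses the $m!$-trick above. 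A related gloss: alternative (b) of Lemma \ref{P:im-da} is an orbit-separation statement, not the assertion $x\in\mathcal{U}_{+}(F_{0})\cup\mathcal{U}_{-}(F_{0})$; the passage from (b) to $X\setminus Q_{0}\subset\mathcal{U}_{2}(F_{0})$ is obtained in the paper by repeating the arguments of Pol\'{a}\v{c}ik and Tere\v{s}\v{c}\'{a}k \cite[Section 5]{PT92}, a bridge your ``granting this'' leaves unbuilt.
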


\begin{rmk}\label{R:Wang et al.}
Enciso, Hirsch and Smith \cite{EHS08} firstly investigated the prevalent behavior of strongly monotone semiflows and proved that the set of points that converge to equilibria is prevalent. Recently, Wang et al. \cite{WYZ20-2} proved the set of points that converge to cycles is prevalent for discrete-time systems.
Here, Proposition \ref{T:im-pre} improves \cite[Theorem A]{WYZ20-2} by showing that the set of minimal periods of linearly stable cycles is bounded by $m$. In addition, we succeed in proving the measurability of $Q_0$ in Proposition \ref{T:im-pre} by virtue of the boundedness of stable periods. Moreover, Proposition \ref{T:im-pre} is also shown to be robust under the $C^{1}$-perturbation (see Proposition \ref{T:im-pre-per}).
\end{rmk}

We further consider the $C^1$-robustness for the improved prevalent dynamics. The following theorem reveals that the improved prevalent dynamics of $F_0$ (Proposition \ref{T:im-pre}) is robust under the $C^1$-perturbation.

\begin{prop}\label{T:im-pre-per}
{\rm ($C^1$-robustness for improved prevalent dynamics).} Assume that \textnormal{(H1)-(H3)} hold. Assume also $F_{0}$ is pointwise dissipative with an attractor $A$. Let $B_{1}\supset A$ be an open ball such that
$$\sup\{\Vert F_{\epsilon}x-F_0x\Vert +\Vert DF_{\epsilon}(x)-DF_0(x)\Vert :\epsilon\in J, x\in B_1\}$$ sufficiently small.
Then there exists an open bounded set $D_1\supset A$, an integer $m>0$ and an $\epsilon_1>0$ such that, for each $\epsilon\in(-\epsilon_{1},\epsilon_{1})$, the set
$$Q_{\epsilon}:=\{x\in D_1:\omega(x,F_{\epsilon})\text{ is a linearly stable cycle of minimal period at most }m\}$$
is prevalent in $D_1$. In particular, if $X=\mathbb{R}^{d}$, then the set $Q_{\epsilon}$ is of full Lebesgue measure in $D_1$.
\end{prop}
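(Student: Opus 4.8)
The plan is to bootstrap from the non-perturbed improved prevalent dynamics theorem (Proposition \ref{T:im-pre}) together with its robustness input, the $C^1$-robustness of the Sharpened $C^1$-dynamics alternative (Lemma \ref{P:im-da-perturb}, cited from \cite{WY21-2}). First I would fix the geometric trapping data. Since $F_0$ is compact and pointwise dissipative, it has a connected attractor $A$, and one may choose an open ball $B_1\supset A$ on which the $C^1$-distance between $F_\epsilon$ and $F_0$ is controlled by the supremum in the hypothesis. By the standard trapping-region argument for perturbations of dissipative compact maps, once that supremum is small enough there exist an integer $q>0$, an $\epsilon_1>0$, and an open bounded set $D_1\supset A$ with $\overline{D_1}\subset B_1$ such that $F_\epsilon^{q}(\overline{D_1})\subset D_1$ for every $\epsilon\in(-\epsilon_1,\epsilon_1)$; this makes $g_\epsilon:=F_\epsilon^{q}$ a map of $D_1$ into itself which is $\omega$-compact in $D_1$ and whose orbits all have relatively compact closure. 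This $D_1$ and this common integer $q$ will be the sets on which prevalence is asserted.

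Next I would invoke the $C^1$-robustness of the dynamics alternative to control the \emph{stable periods uniformly in} $\epsilon$. The content of Lemma \ref{P:im-da-perturb} should give, for $g_\epsilon$ restricted to the trapping region, a single integer bound—say $k$—on the minimal periods of linearly stable cycles that is uniform over $\epsilon\in(-\epsilon_1,\epsilon_1)$ (after possibly shrinking $\epsilon_1$). By Lemma \ref{L:cycle-fixed pt}, a linearly stable $k$-periodic point of $g_\epsilon=F_\epsilon^q$ is a linearly stable periodic point of $F_\epsilon$ of minimal period at most $kq$; setting $m:=kq$ then furnishes the common period bound claimed in the statement. The measurability of $Q_\epsilon$ follows exactly as in Proposition \ref{T:im-pre}: boundedness of the stable periods lets one write $Q_\epsilon$ as a countable (in fact finite, up to period) intersection/union of sets cut out by the conditions $g_\epsilon^{k}x=x$ and $r_\sigma(Dg_\epsilon^{k}(x))\le 1$, which are Borel.

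The heart of the argument is the prevalence of $Q_\epsilon$ \emph{for each fixed} $\epsilon$, and here I would reproduce the mechanism of Proposition \ref{T:im-pre} for the (non-monotone) map $F_\epsilon$, but driven by the robustness lemma rather than by strong monotonicity. The strategy is to show that along almost every line parallel to some fixed direction $v\gg 0$, all but countably many points land in $Q_\epsilon$, and then apply Proposition \ref{P:Borel-suffi-condi} to conclude shyness of the complement $D_1\setminus Q_\epsilon$. Concretely: for the limit map $F_0$ the upper/lower $\omega$-unstable sets $\mathcal U_\pm$ meet each simply ordered open arc in at most a countable set (Proposition \ref{P:unsta-count}), and on the complement of $\mathcal U_2$ the dynamics alternative forces convergence to a linearly stable cycle; the robustness statement (Lemma \ref{P:im-da-perturb}) transports this convergence-to-stable-cycle conclusion to $F_\epsilon$ for small $\epsilon$, on the trapping region, with the uniform period bound. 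Assembling these along lines parallel to $v$ yields that $L\cap(D_1\setminus Q_\epsilon)$ is countable for every such line $L$, so $D_1\setminus Q_\epsilon$ is shy in $D_1$.

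The main obstacle I anticipate is precisely the last step: making the transport of the ``prevalent convergence to linearly stable cycles'' uniform in $\epsilon$. For $F_0$ one exploits genuine strong monotonicity to produce the ordered arcs and the countable exceptional sets; the perturbed $F_\epsilon$ need not be monotone, so one cannot directly speak of $\mathcal U_\pm(F_\epsilon)$ or of ordered arcs for $F_\epsilon$. The resolution must be to keep the ordered structure attached to $F_0$ and to use Lemma \ref{P:im-da-perturb} as a black box that says: the dynamical conclusion survives the $C^1$-perturbation on the trapping region, with $\epsilon_1$ and the period bound chosen once and for all. Carefully verifying that the exceptional (non-convergent, or non-linearly-stable) set for $F_\epsilon$ still meets each line parallel to $v$ in a countable set—despite $F_\epsilon$ not being monotone—is where the real work lies, and it is exactly the point at which the robustness of the sharpened alternative is indispensable.
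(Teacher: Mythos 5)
Your scaffolding (trapping region from Lemma \ref{P:im-da-perturb}, the period bound $m=m_1q$ via Lemma \ref{L:cycle-fixed pt} and Remark \ref{R:bdd-per-Fep}, shyness via Proposition \ref{P:Borel-suffi-condi}) matches the paper, but there is a genuine gap at exactly the step you flag as ``where the real work lies,'' and the gap comes from misreading what Lemma \ref{P:im-da-perturb}(i) provides. You treat item (i) only as a trapping-region statement ($F_\epsilon^{q}(D_1)\subset D_1$) and then conclude that, since $F_\epsilon$ need not be monotone, one ``cannot directly speak of $\mathcal U_\pm(F_\epsilon)$ or of ordered arcs for $F_\epsilon$,'' proposing instead to keep the order structure attached to $F_0$ and ``transport'' the convergence-to-stable-cycle conclusion from $F_0$ to $F_\epsilon$. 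That transport does not exist: Lemma \ref{P:im-da-perturb} is a statement about the dynamics of $F_\epsilon$ itself, not a pointwise comparison between the orbits of $F_0$ and $F_\epsilon$, and nothing guarantees that points in the good (convergent) set of $F_0$ remain good for $F_\epsilon$ --- exceptional sets can move arbitrarily under a $C^1$-perturbation. As described, your argument cannot produce the countability of $L\cap(D_1\setminus Q_\epsilon)$ along lines $L$.

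The missing idea is the second half of Lemma \ref{P:im-da-perturb}(i): $F_{\epsilon}^{n}x\ll_1 F_{\epsilon}^{n}y$ whenever $x<_1 y$ in $D_1$, for all $n\geq q$. That is, the iterate $F_\epsilon^{q}:D_1\to D_1$ \emph{is} strongly monotone --- not with respect to the original cone $C$, but with respect to the smaller solid cone $C_1\subset{\rm Int}C$. This is precisely what dissolves the obstacle you identify: one does not transport anything from $F_0$; instead one reruns the entire proof of Proposition \ref{T:im-pre} verbatim for the map $F_\epsilon^{q}$ on $D_1$, with the order relations $<_1,\ll_1$ induced by $C_1$ throughout. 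The unstable sets $\mathcal U_\pm(F_\epsilon^{q})$ and the simply ordered arcs are defined with respect to $C_1$; Lemma \ref{P:im-da-perturb}(ii) plays the role of Lemma \ref{P:im-da}; Proposition \ref{P:unsta-count} and Proposition \ref{P:Borel-suffi-condi} (with a direction $v\gg_1 0$) then give that $\tilde Q_\epsilon$ in \eqref{E:Fep-m-pre} is prevalent in $D_1$, and the identification $Q_\epsilon=\tilde Q_\epsilon$ via Lemma \ref{L:cycle-fixed pt} and Remark \ref{R:bdd-per-Fep} finishes the proof, as you correctly outlined. A minor additional slip: your measurability sketch cuts $Q_\epsilon$ out by the conditions $g_\epsilon^{k}x=x$ and $r_\sigma(Dg_\epsilon^{k}(x))\le 1$, but points of $Q_\epsilon$ are generally not periodic themselves; measurability is instead obtained as in Proposition \ref{T:im-pre}, by rewriting $\tilde Q_\epsilon$ as the set of points whose $\omega$-limit under a fixed iterate is a linearly stable \emph{fixed} point and invoking Lemma \ref{L:borel-lsequi}.
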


In order to prove Proposition \ref{T:im-pre}, we need the following important sharpened dynamics alternative for $C^1$-smooth strongly monotone mapping.

\begin{lem}\label{P:im-da}
{\rm (Sharpened $C^1$-dynamics alternative).} Assume that \textnormal{(H1)-(H2)} hold. Assume also $F_{0}$ is pointwise dissipative. Then there is an integer $m>0$ such that, for any $x\in X$, either

\textnormal{(a)} $\omega(x,F_{0})$ is a linearly stable cycle of minimal period at most $m$; or,

\textnormal{(b)} there is a constant $\delta>$ 0 such that, for any $y \in X$ satisfying $y < x$ or $y > x$,
$$\mathop{\limsup}\limits_{n \to +\infty}\|F_{0}^{n}x-F_{0}^{n}y\|\geq\delta.$$
\end{lem}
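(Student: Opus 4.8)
The plan is to prove this sharpened dynamics alternative by combining the classical theory of limit sets in strongly monotone maps with a careful analysis of linear stability, extracting a uniform bound $m$ on the minimal periods. The central dichotomy says: for each $x$, either its orbit settles onto a nicely behaved (linearly stable, low-period) cycle, or there is persistent separation between the orbit of $x$ and the orbits of all points comparable to $x$. I would first recall that under (H1)--(H2), since $F_0$ is compact and pointwise dissipative, there is a connected attractor $A$ and every orbit has compact closure, so that $\omega(x,F_0)$ is nonempty, compact, and invariant for every $x$. The strong monotonicity of $F_0$ (from strong positivity of $DF_0$ via the mean value inequality for the order cone) then allows one to invoke the Hirsch/Tak\'a\v c--Pol\'a\v cik convergence machinery: for a strongly monotone map, generic orbits converge to cycles, and crucially the nonordering-of-limit-sets principle constrains the structure of $\omega(x,F_0)$.

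Next I would address the trichotomy that actually drives the proof. Given $x$, consider the behavior of nearby comparable points $y$ with $y<x$ or $y>x$. If alternative (b) fails, then there exist comparable points whose orbits approach the orbit of $x$ arbitrarily closely along a subsequence; strong monotonicity propagates this and forces $\omega(x,F_0)$ to be order-related to limit sets of these approximating points in a way that, by the nonordering property, collapses $\omega(x,F_0)$ onto a single cycle. The key structural input is that a strongly monotone map cannot have two distinct points of an $\omega$-limit set that are strongly ordered, so once the separation $\delta$ degenerates the limit set must be a cycle. Establishing that this cycle is \emph{linearly stable} is where I would use the compactness of $DF_0^p(z)$ (from compactness of $F_0$) together with the strong positivity of the derivative: by the Krein--Rutman theorem $DF_0^p(z)$ has a simple leading positive eigenvalue with eigenvector in $\mathrm{Int}\,C$, and the failure of (b) rules out the leading eigenvalue exceeding $1$, giving $r_\sigma(DF_0^p(z))\le 1$, i.e.\ $\lambda_1(z,F_0)\le 0$.

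The genuinely hard part, and the part requiring the most care, is producing the \emph{uniform} integer $m$ bounding the minimal period across all $x\in X$ simultaneously, rather than a period depending on $x$. I would obtain this by working on the attractor $A$: since $A$ is compact and invariant, and every linearly stable cycle arising in case (a) lies (up to the asymptotics) in a compact region attracted to $A$, one can use a compactness argument together with the local structure of linearly stable cycles. A linearly stable cycle has a well-defined local stable/center behavior, and strong monotonicity forbids accumulation of cycles of unbounded period in a compact invariant set — this is precisely the mechanism behind the boundedness of stable periods referenced in Remark~\ref{R:Wang et al.} and Lemma~\ref{L:cycle-fixed pt}. Concretely, I would argue by contradiction: if there were linearly stable cycles of arbitrarily large minimal period, their representative points would accumulate in the compact attractor, and a limiting argument using continuity of $DF_0$ and the simplicity of the Krein--Rutman eigenvalue would produce a contradiction with the strong monotone structure (two distinct accumulating cycles would be forced into an order relation violating the nonordering principle).

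Finally, I would assemble the pieces: for each $x$, if (b) holds we are done; if (b) fails, the argument above yields that $\omega(x,F_0)$ is a linearly stable cycle, and the uniform period bound $m$ (valid because all such cycles live in the compact attractor) places us in case (a). The main obstacle, to reiterate, is converting the pointwise convergence-to-cycles result into a single alternative with a uniform period bound $m$ that is independent of $x$; the resolution rests on compactness of the attractor $A$ and the rigidity imposed by strong monotonicity on the set of linearly stable cycles. The remaining steps (nonexistence of strongly ordered pairs in a limit set, the Krein--Rutman input for linear stability) are standard consequences of (H1)--(H2) and can be cited from the monotone-systems literature.
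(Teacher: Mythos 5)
The first thing to note is that the paper does not prove Lemma \ref{P:im-da} at all: its ``proof'' is a citation to Theorem A of Wang--Yao \cite{WY21-2}, where the argument occupies a substantial part of that paper and rests on the exponential-separation theory of Pol\'a\v{c}ik--Tere\v{s}\v{c}\'ak \cite{PT92,PT93} and Tere\v{s}\v{c}\'ak \cite{T94}. Your proposal instead attempts a self-contained derivation, and while it names several correct ingredients (nonordering of limit sets, Krein--Rutman, compactness of the attractor, boundedness of stable periods), the two load-bearing steps are asserted rather than proved, and the mechanisms you assign to them cannot carry them. The claim that failure of (b) forces $\omega(x,F_0)$ to collapse onto a cycle does not follow from the nonordering principle. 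For discrete-time strongly monotone maps, $\omega$-limit sets are unordered, but an unordered compact invariant set is in general very far from being a cycle --- this is exactly why the convergence-to-cycle theory for maps is so much harder than Hirsch's theory for flows and why it requires $C^1$-smoothness. The genuine proof constructs a dominant one-dimensional invariant (principal Floquet) bundle, spanned by positive vectors, over the whole orbit closure $\overline{O(x,F_0)}$ --- not just at periodic points --- using compactness and strong positivity of $DF_0$; it is this spectral gap along arbitrary orbits, not topological nonordering, that converts approach of comparable orbits into convergence to a linearly stable cycle. In your sketch, Krein--Rutman is invoked only at periodic points, which is of no use until one already knows the limit set is a cycle, so the argument is circular at its crucial joint.

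The second gap concerns the uniform bound $m$. The boundedness of minimal periods of linearly stable cycles is the theorem of Hess--Pol\'a\v{c}ik \cite{PH93} (extended in \cite{T94,WY21-2}), and its proof again runs through exponential separation. Your contradiction sketch --- cycles of unbounded period accumulating in the compact attractor would be ``forced into an order relation violating the nonordering principle'' --- fails as stated, because the nonordering principle constrains pairs of points \emph{within a single} limit set; it says nothing about points lying on \emph{distinct} cycles, which may perfectly well be order-related (ordered equilibria are the simplest example) or unrelated. Mere accumulation of distinct stable cycles in a compact set therefore produces no contradiction with monotonicity by itself; one needs the quantitative separation constants, uniform over the attractor, that the invariant-bundle theory provides. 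In short, your outline reproduces the statement's architecture but omits the exponential-separation machinery that is the actual content of the cited proof; with that machinery missing, both the collapse-to-cycle step and the uniform period bound remain unproved.
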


\begin{proof}
See Wang and Yao \cite[Theorem A]{WY21-2}.
\end{proof}

\begin{rmk}\label{R:bdd-per-F0}
In fact, all the stable periods of $F_{0}$ are bounded above by $m$ (see \cite[Corollary 2.5]{WY21-2}).
\end{rmk}


\begin{lem}\label{L:borel-lsequi}
Let $B\subset X$ be a Borel subset and $h:B\to B$ be a $C^1$-map. Then the set $C_{s}:=\{x\in B: \omega(x,h)=\{z\}\ \text{for some linearly stable fixed point $z$ of $h$ }\}$ is Borel.
\end{lem}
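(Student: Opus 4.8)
The plan is to decompose the defining condition of $C_{s}$ into a measurability statement about the set of linearly stable fixed points and a measurability statement about the asymptotic behaviour of individual orbits, and then recombine the two. Concretely, I would isolate the set
$$E:=\{z\in B:\ h(z)=z \text{ and } r_\sigma(Dh(z))\le 1\}$$
of linearly stable fixed points (recall that for a fixed point $z$, linear stability $\lambda_1(z,h)\le 0$ is equivalent to $r_\sigma(Dh(z))\le 1$), and then rewrite $C_{s}$ as the set of $x\in B$ whose orbit accumulates at a single point which, moreover, lies in $E$.

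First I would show that $E$ is Borel. The fixed-point set $\{z\in B:h(z)=z\}$ is closed in $B$, being the zero set of the continuous map $z\mapsto h(z)-z$. For the stability constraint, Gelfand's formula gives $r_\sigma(A)=\inf_{n\ge 1}\|A^{n}\|^{1/n}$; since each $A\mapsto\|A^{n}\|^{1/n}$ is continuous on $\mathscr{L}(X)$, the spectral radius $r_\sigma$ is upper semicontinuous, and because $h$ is $C^{1}$ the map $z\mapsto Dh(z)$ is continuous. Hence $z\mapsto r_\sigma(Dh(z))$ is upper semicontinuous on $B$, so
$$\{z\in B:\ r_\sigma(Dh(z))\le 1\}=\bigcap_{m\ge 1}\Big\{z\in B:\ r_\sigma(Dh(z))<1+\tfrac1m\Big\}$$
is a countable intersection of relatively open sets, hence $G_\delta$. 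Intersecting it with the closed fixed-point set shows $E$ is Borel.

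Next I would encode the orbital condition. Since $X$ is a Banach space, the convergence set $\mathrm{Conv}:=\{x\in B:(h^{n}x)_{n}\text{ converges in }X\}$ is Borel by the Cauchy criterion,
$$\mathrm{Conv}=\bigcap_{k\ge 1}\bigcup_{N\ge 1}\bigcap_{m,n\ge N}\Big\{x\in B:\ \|h^{m}x-h^{n}x\|\le \tfrac1k\Big\},$$
each inner set being closed by continuity of the iterates. On $\mathrm{Conv}$ the limit map $g(x)=\lim_{n}h^{n}x$ is a pointwise limit of continuous maps, hence Borel measurable, and any such limit lying in $B$ is automatically a fixed point by continuity of $h$. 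Thus $g^{-1}(E)=\{x\in\mathrm{Conv}:g(x)\in E\}$ is Borel and coincides with $\{x\in B:\ h^{n}x\to z\text{ for some }z\in E\}$. When the orbit of $x$ is relatively compact—which is the case throughout the applications of this lemma—$\omega(x,h)$ is a singleton exactly when $(h^{n}x)$ converges, so $C_{s}=g^{-1}(E)$ and the argument closes.

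The main obstacle is the step linking $\omega(x,h)=\{z\}$ with genuine convergence $h^{n}x\to z$: in full generality a singleton $\omega$-limit set need not force the orbit to converge. To handle the general case I would instead characterize $C_{s}$ through the two conditions $\mathrm{diam}\,\omega(x,h)=0$ and $\omega(x,h)\cap E\neq\varnothing$, detecting the latter by the separability of $X$—replacing the uncountable union $\bigcup_{z\in E}\{x:\omega(x,h)=\{z\}\}$ by a countable scheme over a dense family of centres and rational radii. The delicate point here is that the diameter of $\omega(x,h)=\bigcap_{k}\overline{\{h^{n}x:n\ge k\}}$ is \emph{not} the limit of the tail diameters, so the singleton test must be built directly from the limit-point structure of the orbit rather than from Cauchy-type estimates; this is the step I expect to require the most care.
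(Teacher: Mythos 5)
Your argument is correct and is essentially the paper's own: the paper proves Lemma \ref{L:borel-lsequi} simply by citing Enciso--Hirsch--Smith \cite[Appendix, Lemma 10]{EHS08}, and the argument behind that citation is the scheme you give --- Borel measurability of the set of linearly stable fixed points (upper semicontinuity of the spectral radius via Gelfand's formula, together with continuity of $z\mapsto Dh(z)$), Borel measurability of the set of convergent points via the Cauchy criterion, Borel measurability of the limit map as a pointwise limit of continuous maps, and then taking a preimage. Concerning the obstacle you flag: you are right that, without precompactness of orbits, $\omega(x,h)=\{z\}$ does not imply $h^{n}x\to z$, so your completed argument proves the statement only for points with precompact orbits; but this is not a defect relative to the paper. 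The cited lemma of \cite{EHS08} is itself established for semiflows whose orbits have compact closure (where a singleton $\omega$-limit set and convergence of the orbit coincide), and in every application of the present lemma (the proofs of Propositions \ref{T:im-pre} and \ref{T:im-pre-per}) the map in question is compact by (H2)/(H3), so all relevant orbits are precompact. In other words, the lemma as stated is slightly more general than what either you or the paper actually proves; your final paragraph makes this explicit, and the genuinely delicate general case you sketch --- where quantifying over an uncountable set of candidate limit points threatens to produce only analytic or co-analytic sets rather than Borel ones --- is never needed in the paper.
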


\begin{proof}
Enciso, Hirsch and Smith \cite[Appendix, Lemma 10]{EHS08} proved this lemma for $C^1$-semiflows. One can follow the exactly same arguments and obtain an analogous proof for $C^1$-maps. We omit it here.
\end{proof}

Now, we are ready to prove Proposition \ref{T:im-pre}.
\vskip 2mm
\noindent
{\it Proof of Proposition \ref{T:im-pre}.}
Let the integer $m>0$ be in Lemma \ref{P:im-da}. Define
$$Q_{0}:=\{x\in X:\omega(x,F_{0})\text{ is a linearly stable cycle of minimal period at most }m\},$$
and
$$Q_{0}^{\prime}:=\{x\in X:\omega(x,F_{0}^{m!})\text{ is a linearly stable fixed point}\}.$$
We first show that $Q_{0}=Q_{0}^{\prime}$.
Obviously, $Q_{0}\subset Q_{0}^{\prime}$. Notice from Remark \ref{R:bdd-per-F0} that all the stable periods of $F_{0}$ are bounded above by $m$. Then, $Q_{0}^{\prime}\subset Q_{0}$. Thus, $Q_{0}=Q_{0}^{\prime}$.

It follows from Lemma \ref{L:borel-lsequi} that $Q_{0}^{\prime}$ is Borel. So, $Q_{0}$ is also Borel. By virtue of Lemma \ref{P:im-da}, we can repeat the exactly same arguments in Pol\'{a}\v{c}ik and Tere\v{s}\v{c}\'{a}k \cite[Section 5]{PT92} to obtain that $X\setminus Q_{0}\subset\mathcal{U}_{2}(F_{0})$. Proposition \ref{P:unsta-count} implies that, for every simply ordered, open arc $J^{\prime}$, $J^{\prime}\cap\mathcal{U}_{2}(F_{0})$ is at most countable. So, $J^{\prime}\setminus Q_{0}$ is also at most countable. Thus, Proposition \ref{P:Borel-suffi-condi} entails that $X\setminus Q_{0}$ is shy. Therefore, $Q_{0}$ is prevalent.

In particular, if $X=\mathbb{R}^{d}$, then $Q_{0}$ is of full Lebesgue measure in $X$. We have completed the proof.\hfill $\square$
\vskip 3mm

In the rest of this section, we are going to prove Proposition \ref{T:im-pre-per}. To this end, we introduce the following lemma, which reveals that the sharpened dynamics alternative of $F_0$ (Lemma \ref{P:im-da}) is robust under the $C^1$-perturbations.

\begin{lem}\label{P:im-da-perturb}
{\rm ($C^1$-robustness for sharpened $C^1$-dynamics alternative).}
Let all hypotheses in Proposition \ref{T:im-pre-per} hold.
Then there is a solid cone $C_1\subset{\rm Int}C$, an open bounded set $D_1$ ($B_{1}\supset D_1\supset A$), integers $q, m_1>0$ and an $\epsilon_1>0$ such that, for each $\epsilon\in(-\epsilon_1,\epsilon_1)$,

{\rm (i)}. For each $n\geq q$, $F_{\epsilon}^{n}(D_1)\subset D_1$ and $F_{\epsilon}^{n}x\ll_1F_{\epsilon}^{n}y$ whenever $x<_1y$ (with $x, y\in D_1$).

{\rm (ii)}. For each $x\in D_1$, either

\quad\quad\textnormal{(a)} $\omega(x,F_{\epsilon}^{q})$ is a linearly stable cycle of minimal period at most $m_1$; or,

\quad\quad\textnormal{(b)} there is a constant $\delta>$ 0 such that, for any $y \in D_1$ satisfying $y <_1x$ or $y >_1 x$,$$\mathop{\limsup}\limits_{n \to +\infty}\|F_{\epsilon}^{nq}x-F_{\epsilon}^{nq}y\|\geq\delta.$$
\end{lem}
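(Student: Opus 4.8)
The plan is to prove Lemma \ref{P:im-da-perturb} by transferring the sharpened dynamics alternative for the unperturbed map $F_0$ (Lemma \ref{P:im-da}) to nearby perturbed maps $F_\epsilon$ via a robustness argument, following the strategy of the $C^1$-robustness result of \cite{WY21-2}. The central difficulty is that $F_\epsilon$ is \emph{not} assumed to be monotone with respect to the original cone $C$; the perturbation can destroy monotonicity. So the first and most essential step is to manufacture a new, slightly smaller solid cone $C_1\subset{\rm Int}C$ together with an iterate exponent $q$ so that, on a suitable bounded neighborhood $D_1$ of the attractor $A$, the iterate $F_\epsilon^{q}$ becomes strongly monotone with respect to $C_1$ for all small $\epsilon$.

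\textbf{Step 1 (recovering monotonicity after perturbation).} Since $F_0$ is $C^1$ with $DF_0(x)$ strongly positive for every $x$, on the compact set $\overline{B_1}$ there is a uniform lower bound making $DF_0(x)$ strongly positive in a quantitative sense. By a standard construction one chooses a solid cone $C_1\subset{\rm Int}C$ and an integer $q$ so that $DF_0^{q}(x)$ maps $C_1\setminus\{0\}$ into ${\rm Int}\,C_1$ uniformly on $\overline{B_1}$. Because \eqref{E:C1-close} controls $\|DF_\epsilon(x)-DF_0(x)\|$ uniformly on $B_1$, for all $\epsilon$ in a small interval $(-\epsilon_1,\epsilon_1)$ the derivative $DF_\epsilon^{q}(x)$ inherits the property $DF_\epsilon^{q}(x)(C_1\setminus\{0\})\subset{\rm Int}\,C_1$. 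Integrating along segments in $D_1$ then yields $F_\epsilon^{nq}x\ll_1 F_\epsilon^{nq}y$ whenever $x<_1 y$, giving part (i); I would also shrink $D_1$ and enlarge $q$ if necessary so that $F_\epsilon^{n}(D_1)\subset D_1$ for all $n\ge q$, using that $A$ is attracting and $F_\epsilon$ is $C^1$-close to $F_0$.

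\textbf{Step 2 (applying the alternative to the iterate).} Having arranged that $g_\epsilon:=F_\epsilon^{q}$ is strongly monotone with respect to $C_1$ on the invariant bounded set $D_1$, and noting $g_\epsilon$ is compact and pointwise dissipative on $D_1$ for small $\epsilon$, I would invoke Lemma \ref{P:im-da} with $(X,C)$ replaced by $(D_1,C_1)$ and $F_0$ replaced by $g_\epsilon$. This produces an integer $m_1>0$ (uniform in $\epsilon$, obtained from the uniform-in-$\epsilon$ constants in Step 1) such that for each $x\in D_1$ either $\omega(x,g_\epsilon)=\omega(x,F_\epsilon^{q})$ is a linearly stable cycle of minimal period at most $m_1$ (alternative (a)), or the uniform separation estimate (b) holds for all $y$ with $y<_1 x$ or $y>_1 x$. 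The linear-stability notion here is with respect to $g_\epsilon$, matching the statement.

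The step I expect to be the main obstacle is Step 1, specifically obtaining the cone $C_1$ and the exponent $q$ \emph{uniformly} in $\epsilon$ while simultaneously guaranteeing $F_\epsilon^{n}(D_1)\subset D_1$. The tension is that strong positivity of $DF_0^q$ on $\overline{B_1}$ is a compactness-based uniform estimate, but one must ensure the same $C_1$ and $q$ work after a $C^1$-small perturbation \emph{and} that $D_1$ stays forward invariant under the new non-monotone maps; balancing the size of $\epsilon_1$, the aperture of $C_1$, and the attraction rate of $A$ is delicate. I would extract the needed uniform construction directly from Lemma \ref{P:im-da-perturb}'s analogue in \cite{WY21-2}, citing it for the cone-and-invariance machinery, and then merely verify that the hypotheses (H1)--(H3), pointwise dissipativity, and the smallness of \eqref{E:C1-close} supply exactly the inputs that construction requires.
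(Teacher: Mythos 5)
The paper proves this lemma purely by citation: item (i) is Tere\v{s}\v{c}\'{a}k \cite[Theorem 5.1]{T94} (the cone $C_1$, the iterate $q$, the invariant neighborhood $D_1$, and strong monotonicity of $F_\epsilon^{q}$ with respect to $C_1$), and item (ii) is \cite[Theorem 3.1]{WY21-2}, the dedicated $C^1$-robustness theorem. Your Step 1 essentially coincides with the first citation (and you sensibly defer the delicate cone-and-invariance construction to the literature; note that in infinite dimensions strong positivity of $DF_0(x)$ with respect to $C$ is \emph{not} automatically stable under small perturbations, since the unit sphere of $C$ is noncompact, which is exactly why Tere\v{s}\v{c}\'{a}k's construction is nontrivial). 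The problem is Step 2.

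The genuine gap is the uniformity of $m_1$ in $\epsilon$. Applying the unperturbed sharpened alternative (Lemma \ref{P:im-da}) separately to each map $g_\epsilon=F_\epsilon^{q}$ on $(D_1,C_1)$ produces, for each $\epsilon$, a bound $m_1(\epsilon)$ on stable minimal periods --- and nothing more. Your parenthetical claim that $m_1$ is ``uniform in $\epsilon$, obtained from the uniform-in-$\epsilon$ constants in Step 1'' is unsupported: the integer $m$ in Lemma \ref{P:im-da} does not come from the cone aperture or monotonicity constants; it comes from boundedness-of-stable-periods arguments in the style of Hess--Pol\'{a}\v{c}ik \cite{PH93}, which rest on exponential separation near the attractor, a quantity not controlled by Step 1. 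Proving that a single $m_1$ works for all $\epsilon\in(-\epsilon_1,\epsilon_1)$ is precisely the nontrivial content of \cite[Theorem 3.1, Proposition 2.4]{WY21-2} (see Remark \ref{R:bdd-per-Fep} in the paper), so your route either delivers only an $\epsilon$-dependent $m_1(\epsilon)$ --- which is weaker than the statement and insufficient for its use in Proposition \ref{T:im-pre-per} and Theorem B, where a uniform $m$ is part of the claim and also underpins the measurability argument via the $m!$-iterate --- or it must in the end cite the very robustness theorem it is attempting to re-derive, making Step 2 circular. A secondary, more technical point: Lemma \ref{P:im-da} is stated for a compact, pointwise dissipative, strongly monotone map on all of $X$, so applying it verbatim to $g_\epsilon$ restricted to the bounded open set $D_1$ requires a (routine but unstated) adaptation of its hypotheses to maps on order-convex bounded domains.
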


\begin{proof}
For the proof of item (i), we refer to Tere\v{s}\v{c}\'{a}k \cite[Theorem 5.1]{T94}. For the proof of item (ii), see \cite[Theorem 3.1]{WY21-2}.
\end{proof}

\begin{rmk}\label{R:bdd-per-Fep}
In fact, for each $\epsilon\in(-\epsilon_1,\epsilon_1)$, all the stable periods of ${F_{\epsilon}^{q}|}_{\overline{D}_1}$ are bounded above by $m_1$ (see \cite[Proposition 2.4]{WY21-2}). In addition, for any open bounded subset $D_2$ (satisfying $D_1\supset\overline{D}_2\supset D_{2}\supset A$), one can let the integer $q>0$ larger (if necessary) in Lemma \ref{P:im-da-perturb} such that, both $D_1$ and $D_2$ satisfies items (i)-(ii) in Lemma \ref{P:im-da-perturb} (see \cite[Remark 2.3]{WY21-2} or \cite[Eq.(5.11) on p.19]{T94}). Moreover, one can also choose $D_2$ to be connected, since $A$ is connected. If $X=\mathbb{R}^d$, this leads $\overline{D}_2$ to be a compact domain.
\end{rmk}

Hereafter in this paper, we always reserve the open bounded subsets $D_1,D_2$ (with $D_1\supset\overline{D}_2\supset D_{2}\supset A$) and the integers $q,m_1>0$ as in Lemma \ref{P:im-da-perturb} and Remark \ref{R:bdd-per-Fep}.

Now, we are ready to prove Proposition \ref{T:im-pre-per}.
\vskip 2mm
\noindent
{\it Proof of Proposition \ref{T:im-pre-per}.}
Let the open bounded subset $D_1$, the integers $q,m_1>0$ and $\epsilon_1>0$  be in Lemma \ref{P:im-da-perturb}. Clearly, it follows from (H3) that $F_\ep^q:D_1\to D_1$ is compact. Moreover, for each $\epsilon\in(-\epsilon_{1},\epsilon_{1})$, Lemma \ref{P:im-da-perturb}(i) directly implies that $F_\ep^q:D_1\to D_1$ is {\it strongly monotone with respect to $C_{1}$}.  By Lemma \ref{P:im-da-perturb}(ii), we can repeat the exactly same arguments in the proof of Proposition \ref{T:im-pre} (with $F_0$ replaced by $F_\ep^q$ there) to obtain that, the set
\begin{equation}\label{E:Fep-m-pre}
\tilde{Q}_{\epsilon}:=\{x\in D_{1}:\omega(x,F_{\epsilon}^{q})\text{ is a linearly stable cycle of minimal period at most }m_1\}
\end{equation}
is prevalent in $D_1$, for each $\epsilon\in(-\epsilon_1,\epsilon_1)$.

Now, we define
$$Q_{\epsilon}:=\{x\in D_{1}:\omega(x,F_{\epsilon})\text{ is a linearly stable cycle of minimal period at most }m\},$$
where $m=m_1q$. On one hand, it is clear that if $\omega(x,F_{\epsilon})$ is a linearly stable cycle, then $\omega(x,F_{\epsilon}^{q})$ is a linearly stable cycle.
Then, the bound $m_1$ of stable periods of ${F_{\epsilon}^{q}|}_{\overline{D}_1}$ in Remark \ref{R:bdd-per-Fep} entails that, $Q_{\epsilon}\subset\tilde{Q}_{\epsilon}$. On the other hand, it follows from Lemma \ref{L:cycle-fixed pt} that $\tilde{Q}_{\epsilon}\subset Q_{\epsilon}$. Then, we have proved $Q_{\epsilon}=\tilde{Q}_{\epsilon}$. Thus, $\tilde{Q}_{\epsilon}$ is also prevalent in $D_{1}$, for each $\epsilon\in(-\epsilon_1,\epsilon_1)$.

In particular, if $X=\mathbb{R}^{d}$, then $Q_{\ep}$ is of full Lebesgue measure. This completes the proof.
\hfill $\square$
\vskip 3mm

\begin{rmk}
We specially point out that, one can apply our theoretical result in this section (Proposition \ref{T:im-pre} and Proposition \ref{T:im-pre-per}) to obtain that, the improved prevalence of convergence to periodic solutions with a uniform bound of minimal periods, for time-periodic parabolic equations and their $C^1$-perturbed systems (see e.g. the equations in \cite{PH93,PT93,WY21-2,WYZ20-2}).
\end{rmk}

\section{Proof of the Main Results}\label{S:tran-chaos}
We focus on in this section and prove Theorems A-C. In order to prove our main results, we need the following formula for the spectral radius of an operator.

\begin{lem}\label{L:spe-radi-norm}
Let $(X,\|\cdot\|)$ be a Banach space and $T\in\mathscr{L}(X)$. Then $r_{\sigma}(T)=\inf |T|$, where the infimum is taken over all norms $|\cdot|$ on $X$ equivalent to $\|\cdot\|$.
\end{lem}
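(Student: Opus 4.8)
The plan is to combine Gelfand's spectral radius formula with an explicit construction of a nearly optimal equivalent norm. Recall first that the spectrum, and hence the spectral radius $r_{\sigma}(T)$, is invariant under replacing $\|\cdot\|$ by any equivalent norm, since equivalent norms induce the same topology and the same operator algebra $\mathscr{L}(X)$. Thus for any equivalent norm $|\cdot|$, Gelfand's formula gives $r_{\sigma}(T)=\lim_{n\to\infty}|T^{n}|^{1/n}\le|T|$, using $|T^{n}|\le|T|^{n}$. Taking the infimum over all equivalent norms yields the easy inequality $r_{\sigma}(T)\le\inf|T|$.

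For the reverse inequality I would fix $\eps>0$, set $r:=r_{\sigma}(T)+\eps>0$, and define
$$|x|:=\sum_{n=0}^{\infty}\frac{\|T^{n}x\|}{r^{n}},\qquad x\in X.$$
The first step is to verify that this series converges and defines an equivalent norm. By Gelfand's formula, for some small $\eps^{\prime}\in(0,\eps)$ one has $\|T^{n}\|\le(r_{\sigma}(T)+\eps^{\prime})^{n}$ for all large $n$; since the ratio $(r_{\sigma}(T)+\eps^{\prime})/r<1$, the scalar series $\sum_{n}\|T^{n}\|/r^{n}$ converges to some finite $M\ge1$. Hence $\|x\|\le|x|\le M\|x\|$, so $|\cdot|$ is indeed a norm equivalent to $\|\cdot\|$.

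The crux is then the computation of $|T|$ in this norm. Reindexing the sum,
$$|Tx|=\sum_{n=0}^{\infty}\frac{\|T^{n+1}x\|}{r^{n}}=r\sum_{m=1}^{\infty}\frac{\|T^{m}x\|}{r^{m}}=r\bigl(|x|-\|x\|\bigr)\le r\,|x|,$$
so that $|T|\le r=r_{\sigma}(T)+\eps$. Therefore $\inf|T|\le r_{\sigma}(T)+\eps$ for every $\eps>0$, and letting $\eps\to0$ gives $\inf|T|\le r_{\sigma}(T)$. Combined with the easy inequality, this proves $r_{\sigma}(T)=\inf|T|$.

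I expect the only genuinely delicate point to be the construction and convergence of the weighted norm $|\cdot|$: one must invoke Gelfand's formula to control the growth of $\|T^{n}\|$ and choose the geometric weight $r^{-n}$ with $r$ \emph{strictly} larger than $r_{\sigma}(T)$, so that convergence of the defining series and the sharp estimate $|T|\le r$ hold simultaneously. The edge case $r_{\sigma}(T)=0$ requires no separate treatment, since $r=\eps>0$ keeps every denominator positive.
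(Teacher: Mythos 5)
Your proof is correct. Note that the paper offers no argument of its own for this lemma: it simply cites Holmes (Amer.\ Math.\ Monthly, 1968) for the general Banach-space case and Horn--Johnson for the finite-dimensional case. Your weighted-geometric-series renorming
$|x|=\sum_{n\ge 0}\|T^{n}x\|/r^{n}$ with $r=r_{\sigma}(T)+\varepsilon$ is precisely the classical argument behind that citation (Holmes's proof is the same renorming trick, up to the cosmetic variant of using $\sup_{n}\|T^{n}x\|/r^{n}$ instead of the sum), and all the delicate points are handled properly: the spectral radius is norm-independent because equivalent norms give the same algebra $\mathscr{L}(X)$, Gelfand's formula guarantees convergence of the defining series once $r$ is strictly larger than $r_{\sigma}(T)$, the $n=0$ term gives $\|x\|\le|x|$ so the new norm is equivalent and definite, and the reindexing identity $|Tx|=r\bigl(|x|-\|x\|\bigr)\le r|x|$ yields $|T|\le r$. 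So you have in effect supplied the self-contained proof that the paper outsources to the literature.
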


\begin{proof}
See Holmes \cite[Theorem on p.164]{Holmes}. For finite-dimensional case, see also Horn and Johnson \cite[Lemma 5.6.10]{Johnson}.
\end{proof}

Now, we are in position to prove Theorems A-C.
\vskip 2mm

\noindent
{\it Proof of Theorem A.}
Let the integer $m>0$ be obtained in Proposition \ref{T:im-pre}. Take $T:=F_{0}^{m!}$. We first {\it assert} that: \emph{If $x\in X$ with $\omega(x,T)=\{z\}$ for some linearly stable fixed point $z$ of $T$, then $\lambda_{1}(x,F_{0})\leq0$}.

In fact, one has $r_{\sigma}(DT(z))\leq1$, since $z$ is a linearly stable fixed point of $T$. Then by Lemma \ref{L:spe-radi-norm}, for any $\tilde{\ep}>0$, there exists a norm $|\cdot|$  equivalent to $\|\cdot\|$ on $X$ such that
$$|DT(z)|<r_{\sigma}(DT(z))+\tilde{\ep}.$$
Hence, $|DT(z)|<1+\tilde{\ep}$. Recall that $T$ is $C^1$. Then there exists a $\delta>0$ such that,
$$|DT(y)|<1+2\tilde{\ep}, \text{ for any } y\in X \text{ with } |y-z|<\delta.$$
It follows from $\omega(x,T)=\{z\}$ that, there exists an integer $N>0$ such that, $|T^nx-z|<\delta$ for any $n\geq N$. Thus, the chain rule shows that
\begin{alignat*}{2}
\lambda_{1}(x,T)&=\limsup_{n \to +\infty}\frac{\log|DT^{n}(x)|}{n}\leq\limsup_{n \to +\infty}\frac{\log|DT^{N}(x)|+\log|DT^{n-N}(T^Nx)|}{n}\\
&\leq\limsup_{n \to +\infty}\frac{\log|DT^{N}(x)|+(n-N)\log(1+2\tilde{\ep})}{n}\leq\log(1+2\tilde{\ep}).
\end{alignat*}
Since the arbitrary of $\tilde{\ep}$, we have $\lambda_{1}(x,T)\leq0$.

For any integer $n\geq1$, write $n=k_{n}m!+l_{n}$, where $k_{n}\geq0$ and $l_{n}\in\{0,1,2,\cdots,m!-1\}$. Then
\begin{eqnarray*}\label{E3.2}
\lambda_{1}(x,F_{0})&=&{\limsup_{n \to +\infty}\frac{\log|DF_{0}^{n}(x)|}{n}}
={\limsup_{n \to +\infty}\frac{\log|DF_{0}^{k_{n}m!+l_{n}}(x)|}{k_{n}m!+l_{n}}}\\
&\leq& {\limsup_{n \to +\infty}\frac{\log(L\cdot|DF_{0}^{k_{n}m!}(x)|)}{k_{n}m!+l_{n}}}=\frac{1}{m!}{\limsup_{k \to +\infty}\frac{\log|DT^{k}(x)|}{k}}\\
&=&\frac{1}{m!}\lambda_{1}(x,T)\leq0,
\end{eqnarray*}
where $L:=\max\{|DF_{0}^{l}(u)|:\ 0\leq l\leq m!-1,\ u\in\overline{O(x,F_{0})}\}$. Thus, we have proved the assertion.

Define
$$S_{0}:=\{x\in X:\ \lambda_{\max}(x,F_{0})> 0\}.$$
Recall that $F_{0}$ is $C^1$. Then the function $x\mapsto \frac{\log\|DF_0^n(x)\|}{n}$ is Borel-measurable, for any $n\geq1$. Hence, $x\mapsto \lambda_{\max}(x,F_0)$ is Borel-measurable. Thus, $S_{0}$ is Borel.

Now, we define
$$Q_{0}^{\prime}:=\{x\in X:\omega(x,T)\text{ is a linearly stable fixed point}\}.$$
The assertion and \eqref{E:map-lambda 1-geq-lambda max} entails that $Q_{0}^{\prime}\subset S_{0}^c$, where $S_{0}^c$ denotes its complement $X\backslash S_{0}$. It follows from the proof of Proposition \ref{T:im-pre} that $Q_{0}^{\prime}$ is prevalent in $X$. Then, we obtain $S_{0}^c$ is also prevalent in $X$, which implies that $S_{0}$ is shy.

In particular, if $X=\mathbb{R}^{d}$, $S_{0}$ is of Lebesgue measure zero  in $\mathbb{R}^{d}$. It follows that $F_{0}$ cannot have observable chaos. Thus, we have proved Theorem A.\hfill $\square$

\vskip 3mm

\noindent
{\it Proof of Theorem B.}
Let the open bounded subsets $D_1,D_2$ (with $D_1\supset\overline{D}_2\supset D_{2}\supset A$) and the integer $q>0$ be in Lemma \ref{P:im-da-perturb} and Remark \ref{R:bdd-per-Fep}. Define the closed bounded set $M:=\overline{D}_2$. It is noticed that $F_{\ep}^{q}(M)\subset M$, since $F_{\ep}^{q}(D_2)\subset D_2$. Define
$$S_{\ep,D_1}:=\{x\in D_1:\  \lambda_{\max}(x,F_{\ep}^{q})> 0\}.$$
By virtue of \eqref{E:Fep-m-pre}, one can follow the exactly same proof of Theorem A to obtain that, $S_{\ep,D_1}$ is shy. Now, we define
$$S_{\ep}:=\{x\in M:\ \lambda_{\max}(x,F_{\epsilon})> 0\}.$$
Clearly,
$$S_{\ep}=\{x\in M:\ \lambda_{\max}(x,F_{\ep}^q)> 0\}.$$
Then, $S_{\ep}\subset  S_{\ep,D_1}$. Moreover, the same reason of $S_{0}$ is Borel in the proof of Theorem A also entails that, $S_{\ep}$ is Borel. Therefore, $S_{\ep}$ is also shy.

In particular, if $X=\mathbb{R}^{d}$, then $M$ is a compact domain in $\mathbb{R}^{d}$. The shyness of $S_{\ep}$ implies that $S_{\ep}$ is of Lebesgue measure zero in $M$. It follows that $F_{\ep}^{q}|_M$ cannot have observable chaos. The proof is completed.

\vskip 3mm

\noindent{\it Proof of Theorem C.}
Define the sets
$$C_{s}:=\{x\in X:\ \omega(x,\Phi)\ \textnormal{is a linearly stable equilibrium}\},$$
and
$$S_{\Phi}:=\{x\in X:\ \lambda_{\max}(x,\Phi)> 0\}.$$

We are going to prove $C_{s}\subset S_{\Phi}^c$. Suppose that $x\in C_{s}$ and $\om(x,\Phi)=\{e\}$. Let $\tilde{t}>0$. Then $r_{\sigma}(D\Phi_{\tilde{t}}(e))\leq1$. Thus, the assertion in the proof of Theorem A (with $F_{0}$ and $T$ replaced by $\Phi_{\tilde{t}}$ there) entails that, $\lambda_{1}(x,\Phi_{\tilde{t}})\leq0$. Thus, $\lambda_{1}(x,\Phi)\leq0$ since the chain rule and the compactness of $\overline{O(x,\Phi)}$. Hence, it follows from \eqref{E:flow-lambda 1-geq-lambda max} that $\lambda_{\max}(x,\Phi)\leq0$. That is to say, $C_{s}\subset S_{\Phi}^c$.

Recall that $\Phi$ is a $C^1$-semiflow. Then $x\mapsto \lambda_{\max}(x,\Phi)$ is Borel-measurable. Thus, $S_{\Phi}$ is Borel. Enciso, Hirsch and Smith \cite[Theorem 1]{EHS08} entails that, $C_{s}$ is prevalent in $X$. Therefore, $S_{\Phi}^c$ is also prevalent in $X$,  which implies that $S_{\Phi}$ is shy.

In particular, if $X=\mathbb{R}^{d}$, $S_{\Phi}$  is of Lebesgue measure zero  in $\mathbb{R}^{d}$.
It follows that $\Phi$ cannot have observable chaos. Thus, we have proved Theorem C.\hfill $\square$

\begin{rmk}\label{R:im-pre-tran-chaos}
Theorems A-C confirm that there is no observable chaos (in the sense of Young \cite[Section 1]{Y13}) in monotone dynamical systems, as well as their $C^1$-perturbed systems. It has been known that chaotic dynamics, including horseshoes, can be found in the codimension-1 invariant manifolds in monotone systems (see e.g., \cite{DP98,S76,S88,S95,S97,WJ01,WX10} and references therein). As a matter of fact, for the horseshoe itself occupies a zero measure set, its presence does not conflict with the fact that,
orbits starting from a full measure set tend eventually to a stable fixed point for certain iteration of a strongly monotone map, as well as for its $C^1$-perturbed maps (see Proposition \ref{T:im-pre} and Proposition \ref{T:im-pre-per}).
Hence, we point out that those chaotic behavior in monotone dynamical systems and their $C^1$-perturbed systems are indeed transient chaos (in the sense of Young \cite[Section 1]{Y13}).
\end{rmk}

\end{document}